\documentclass[a4paper,10pt]{amsart}


\usepackage{amssymb}
\usepackage{amsmath}
\usepackage{amsthm}
\usepackage{color}
\usepackage{enumerate}








 

\newcommand{\Hom}{\operatorname{Hom}}

\newcommand{\Ext}{\operatorname{Ext}}
\newcommand{\Tor}{\operatorname{Tor}}

\newcommand{\Gen}{\operatorname{Gen}}
\newcommand{\Ker}{\operatorname{Ker}}

\newcommand{\Img}{\operatorname{Im}}


\newcommand{\X}{\mathcal{X}}

 



\newcommand{\Flat}{\mathrm{Flat}}

\newcommand{\Proj}{\mathrm{Proj}}

\DeclareMathOperator{\TFree}{TFree}

\newcommand{\ModR}{\mathrm{Mod}\textrm{-}R}
\newcommand{\RMod}{R\textrm{-}\mathrm{Mod}}

\newcommand{\Filt}[1]{\operatorname{Filt}{#1}}


\newcommand{\Prod}{\mathrm{Prod}}

\DeclareMathOperator{\FPD}{FPD}
\DeclareMathOperator{\pd}{pd}

\DeclareMathOperator{\ML}{ML}


\theoremstyle{plain}
\newtheorem{thm}{Theorem}[section]
\newtheorem{lem}[thm]{Lemma}
\newtheorem{prop}[thm]{Proposition}
\newtheorem{cor}[thm]{Corollary}

\theoremstyle{definition}
\newtheorem{defn}[thm]{Definition}

\theoremstyle{remark}
\newtheorem{rem}[thm]{Remark}

\newtheorem{expls}[thm]{Examples}

\usepackage{tikz}
\usepackage{tikz-cd}


\hyphenation{ge-ne-ra-ted} \hyphenation{co-rres-pon-ding}
\hyphenation{su-ppo-se} \hyphenation{sub-mo-du-les}

\title{TOR-PAIRS: PRODUCTS AND APPROXIMATIONS}

\author{Manuel Cort\'es-Izurdiaga}
\address{Department of Mathematics, University of Almeria, E-04071, Almeria, Spain}
\email{mizurdia@ual.es}


\begin{document}

\maketitle

\begin{abstract}
  Recently the author has studied rings for which products of flat
  modules have finite flat dimension. In this paper we extend the
  theory to characterize when products of modules in $\mathcal T$ have
  finite $\mathcal T$-projective dimension, where $\mathcal T$ is the
  left hand class of a Tor-pair $(\mathcal T,\mathcal S)$, relating
  this property with the relative $\mathcal T$-Mittag-Leffler
  dimension of modules in $\mathcal S$. We apply
  these results to study the existence of approximations by modules in
  $\mathcal T$. In order to do this, we give short proofs of the well
  known results that a deconstructible class is precovering and that a
  deconstructible class closed under products is preenveloping.
\end{abstract}

\section*{Introduction}
\label{sec:preliminaries}

Let $R$ be an associative ring with unit. A Tor-pair over $R$ is a
pair of classes $(\mathcal T,\mathcal S)$ of right and left
$R$-modules respectively, which are mutually $\Tor$-orthogonal (see
Section 1 for details). The main objective of this work is to study
two problems about
Tor-pairs over $R$: when $\mathcal T$ is closed under
products and when $\mathcal T$ provides for approximations.

The study of when $\mathcal T$ is closed under products is related
with right Gorenstein regular rings. The ring $R$ is said to be right
Gorenstein regular \cite[Definition 2.1]{EnochsCortesTorrecillas} if
the category of right $R$-modules is a Gorenstein cateogry in the
sense of \cite[Definition 2.18]{EnochsEstradaGarciaRozas}. These rings
may be considered as the natural one-sided extension of classical
Iwanaga-Gorenstein rings to non-noetherian rings (recall that the ring
$R$ is Iwanaga-Gorenstein if it is two sided noetherian with finite
left and right self-injective dimension).

In \cite[Corollary VII.2.6]{BeligiannisReiten} it is proved that the
ring $R$ is right Gorenstein regular if and only if the class of all
right $R$-modules with finite projective dimension coincides with the
class of all right modules with finite injective dimension. If we look
at the class $\Proj_\omega$ of all modules with finite projective
dimension, this condition has two consequences: the right projective
finitistic dimension of $R$ is finite (that is,
$\Proj_\omega = \Proj_n$ for some natural number $n$, where $\Proj_n$
denotes the class of all modules with projective dimension less than
or equal to $n$); and the class $\Proj_\omega$ is closed under
products. As in the classical case of products of projective modules
studied in \cite{Chase}, this last property implies that products of
modules with finite flat dimension have finite flat
dimension. Consequently, the first step in order to understand right
Gorenstein regular rings is to study rings with this property. This
study is developed in \cite{Izurdiaga}.

In the first part of this paper we extend the theory of
\cite{Izurdiaga} to characterize, for a fixed Tor-pair
$(\mathcal T,\mathcal S)$, when products of modules in $\mathcal T$
have finite $\mathcal T$-projective dimension (see Definition
\ref{d:RelativeDimension} for the definition of relative dimensions). As in the case of the flat modules, this
property is related with the $\ML(\mathcal T)$-projective dimension of
modules in $\mathcal S$, see Theorem \ref{t:MainTheorem} (where
$\ML(\mathcal T)$ is the class of all Mittag-Leffler modules with
respect to $\mathcal T$, see Definition \ref{d:RelativeML}).

In the second part of the paper we are interested in approximations by
modules in $\mathcal T$ and in $\mathcal T_n$ (modules with
$\mathcal T$-projective dimension less than or equal to $n$). The relationship of
these approximations with the first part of the paper comes from the
fact that if a class of right $R$-modules is preenveloping then it is
closed under products \cite[Propostion 1.2]{HolmJorgensen}. So that, a
natural question arises: if $\mathcal T_n$ is closed under products,
when is it preenveloping?

One tool in order to construct approximations of modules is that of
deconstruction of classes, because a deconstructible class is always
precovering, \cite[Theorem 2.14]{SaorinStovicek} and \cite[Theorem
5.5]{Enochs12}, and a deconstructible class closed under direct
products is preenveloping \cite[Theorem
4.19]{SaorinStovicek}). The procedure of deconstruction of a class
$\mathcal X$ consists on finding a set $\mathcal S$ such that each
module in $\mathcal X$ is $\mathcal S$-filtered, which means that for
each $X \in \mathcal X$ there exists a continuous chain of submodules
of $X$, $\{X_\alpha: \alpha < \kappa\}$ (where $\kappa$ is a
cardinal), whose union is $X$ and such that
$\frac{X_{\alpha+1}}{X_\alpha} \in \mathcal S$.

In Section 3 we give easy proofs of \cite[Theorem
2.14]{SaorinStovicek} and \cite[Theorem 5.5]{Enochs12} (in Theorem
\ref{t:DeconstructivePrecovering}) and of \cite[Theorem
4.19]{SaorinStovicek} (in Theorem
\ref{t:DeconstructiblePreenveloping}), and we prove that $\mathcal T_m$
is deconstructible for each natural number $m$, so that it is always
precovering and it is preenveloping precisely when it is closed under
products (see Corollary \ref{c:TApproximations}).

Throught the paper $R$ will be an associative ring with unit. We shall
denote by $\ModR$ and $\RMod$ the categories of all right $R$-modules
and left $R$-modules respectively. Given a class $\mathcal X$ of right
$R$-modules, we shall denote by $\Prod(\mathcal X)$ the class
consisting of all modules isomorphic to a direct products of modules
in $\mathcal X$. The classes of flat and projective
modules will be denoted by $\Flat_R$ and $\Proj_R$ respectively. If
there is no possible confussion, we shall omit the subscript $R$. The
cardinal of a set $X$ will be denoted by $|X|$.

\section{Tor-pairs, relative dimensions and relative Mittag-Leffler modules}
\label{sec:preliminaries-1}

Given a class $\mathcal X$ of right (resp. left) $R$-modules we shall
denote by $\mathcal X^{\top}$ (resp ${^\top}\mathcal X$) the class of
all left (resp. right) $R$-modules $M$ satisfying $\Tor_1^R(X,M)=0$
(resp $\Tor_1^R(M,X)=0$) for each $X \in \mathcal X$. Recall that a
Tor-pair is a pair of classes $(\mathcal T, \mathcal S)$ such that
$\mathcal T = {^\top}\mathcal S$ and $\mathcal S = \mathcal
T^\top$. Given a class $\mathcal X$ of right $R$-modules (resp. left
$R$-modules), the pair $({^\top}(\mathcal X^\top),\mathcal X^\top)$
(resp. $({^\top}\mathcal X,({^\top}\mathcal X)^\top)$) is a Tor-pair,
which is called the Tor-pair generated by $\mathcal X$.

Given a class $\mathcal X$ of left modules, a short exact sequence of
right modules
\begin{displaymath}
  \begin{tikzcd}
    0 \arrow{r} & A \arrow{r}{f} & B \arrow{r}{g} & C \arrow{r} & 0
  \end{tikzcd}
\end{displaymath}
is called $\mathcal X$-pure if the sequence
\begin{displaymath}
  \begin{tikzcd}
    0 \arrow{r} & A\otimes X \arrow{r} & B\otimes X \arrow{r} &
    C\otimes X \arrow{r} & 0
  \end{tikzcd}
\end{displaymath}
is exact for each $X \in \mathcal X$. In such case, $f$ is called an
$\mathcal X$-pure monomorphism and $g$ an $\mathcal X$-pure
epimorphism. Note that each pure exact sequence is $\mathcal X$-pure
exact. 

\begin{prop}
  Let $(\mathcal T,\mathcal S)$ be a Tor-pair. Then $\mathcal T$ is
  closed under direct limits, pure submodules and $\mathcal S$-pure
  quotients.
\end{prop}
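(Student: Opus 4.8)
The plan is to reduce all three closure properties to elementary facts about the functors $\Tor_1^R(-,X)$ with $X\in\mathcal{S}$, using only that $\mathcal{T}={}^{\top}\mathcal{S}$. Closure under direct limits is immediate: if $(T_i)_{i\in I}$ is a direct system of modules in $\mathcal{T}$ and $X\in\mathcal{S}$, then $\Tor_1^R(\varinjlim T_i,X)\cong\varinjlim\Tor_1^R(T_i,X)=0$ because $\Tor_1^R(-,X)$ commutes with direct limits; since this holds for every $X\in\mathcal{S}$ we get $\varinjlim T_i\in{}^{\top}\mathcal{S}=\mathcal{T}$.

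For $\mathcal{S}$-pure quotients, suppose $0\to A\to B\to C\to 0$ is $\mathcal{S}$-pure with $B\in\mathcal{T}$, and fix $X\in\mathcal{S}$. The relevant segment of the long exact sequence obtained by applying $-\otimes_R X$ is $\Tor_1^R(B,X)\to\Tor_1^R(C,X)\to A\otimes X\to B\otimes X$. By $\mathcal{S}$-purity the map $A\otimes X\to B\otimes X$ is injective, so the connecting homomorphism $\Tor_1^R(C,X)\to A\otimes X$ vanishes; hence $\Tor_1^R(B,X)\to\Tor_1^R(C,X)$ is surjective, and as $\Tor_1^R(B,X)=0$ we conclude $\Tor_1^R(C,X)=0$. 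Thus $C\in{}^{\top}\mathcal{S}=\mathcal{T}$.

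For pure submodules I would use the standard description of a pure exact sequence $0\to A\to B\to C\to 0$ as a direct limit of split short exact sequences $0\to A\to B_i\to C_i\to 0$: write $C=\varinjlim C_i$ with $C_i$ finitely presented, pull the sequence back along the structure maps $C_i\to C$, and use that a pure exact sequence with finitely presented cokernel splits; one checks $\varinjlim B_i\cong B$. Applying $\Tor_n^R(-,X)$ to each split sequence gives a split exact sequence, and passing to the (exact) direct limit yields that $0\to\Tor_n^R(A,X)\to\Tor_n^R(B,X)\to\Tor_n^R(C,X)\to 0$ is exact for every left module $X$ and every $n\ge1$. In particular $\Tor_1^R(A,X)$ embeds into $\Tor_1^R(B,X)$; taking $B\in\mathcal{T}$ and $X\in\mathcal{S}$ forces $\Tor_1^R(A,X)=0$, so $A\in\mathcal{T}$. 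The only step that is not a routine diagram chase is this last fact about pure exact sequences, and even that is entirely standard (one may instead simply cite that pure exact sequences remain exact under $\Tor_n^R(-,X)$ in the strong form above); so I do not anticipate a genuine obstacle — the proposition is a packaging of the defining property of a Tor-pair with basic homological algebra of $\Tor$.
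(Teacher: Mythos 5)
Your proof is correct, and for two of the three closure properties (direct limits via commutation of $\Tor$ with colimits, and $\mathcal S$-pure quotients via the long exact sequence together with injectivity of $A\otimes_R X\to B\otimes_R X$) it is exactly the argument in the paper. The one place you diverge is closure under pure submodules: the paper simply cites \cite[Proposition 9.12]{AngeleriHerbera}, whose proof runs through a projective presentation $0\to S'\to F\to S\to 0$ of $S\in\mathcal S$ and a diagram chase identifying $\Tor_1^R(A,S)$ inside $A\otimes_R S'$, using that the pure monomorphism $A\to B$ stays injective after tensoring with $S'$. You instead invoke the structure theorem that a pure exact sequence is a direct limit of split short exact sequences and deduce the (stronger) fact that $0\to\Tor_n^R(A,X)\to\Tor_n^R(B,X)\to\Tor_n^R(C,X)\to 0$ is exact for all $n$ and all $X$. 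Both routes are standard and correct; the diagram-chase version is more elementary and is the one the paper reuses later (for $\mathcal S$-pure submodules in the hereditary case, where one only knows $S'\in\mathcal S$ rather than purity against all modules), while your version buys the full splitting of the $\Tor$ long exact sequence at the cost of the direct-limit description of purity. No gaps.
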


\begin{proof}
  $\mathcal T$ is closed under direct limits since the $\Tor$ functor
  commutes with direct limits. $\mathcal T$ is closed under pure
  submodules by \cite[Proposition 9.12]{AngeleriHerbera}. In order to
  see that it is closed under $\mathcal S$-pure quotients, take
  $f:T \rightarrow T'$ a pure epimorphism with $T \in \mathcal T$ and
  denote by $\iota$ the inclusion of $\Ker f$ into $T$. Given
  $S \in \mathcal S$ and applying $-\otimes_RS$ we get the exact
  sequence
  \begin{displaymath}
    \begin{tikzcd}
      \Tor_1^R(T,S) \arrow{r} & \Tor_1(T',S) \arrow{r} & \Ker f
      \otimes_R S \arrow{r}{\iota \otimes S} & T \otimes_RS
    \end{tikzcd}
  \end{displaymath}
  Since $T \in \mathcal T$, the first term is zero and, since $\ker f$
  is a $\mathcal S$-pure submodule of $T$, $\iota \otimes_R S$ is monic. Then
  $\Tor_1^R(T',S)=0$ and, as $S$ is arbitrary, $T'$ belongs to
  $\mathcal T$.
\end{proof}

A class $\mathcal X$ of right $R$-modules is called resolving if it
contains all projective modules and is closed under extensions and
kernels of epimorphisms. A cotorsion pair $(\mathcal F,\mathcal C)$ is
hereditary if $\mathcal F$ is resolving. Similarly, we shall call a
Tor-pair $(\mathcal T, \mathcal S)$ hereditary if $\mathcal T$ is
resolving. The following result is the Tor-pair version of the well
known characterizations of hereditary cotorsion pairs \cite[Theorem
1.2.10]{GarciaRozas}.

\begin{prop}\label{p:Hereditary}
  Let $(\mathcal T,\mathcal S)$ be a Tor-pair. Then:
  \begin{enumerate}
  \item The Tor pair is hereditary.
    
  \item $\mathcal S$ is resolving.

  \item $Tor^R_n(T,S)=0$ for each $T \in \mathcal T$,
    $S \in \mathcal S$ and nonzero natural number $n$.
  \end{enumerate}
\end{prop}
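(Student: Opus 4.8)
My plan is to place the entire weight of the proposition on item (3) and to deduce (1) and (2) from it, so I would prove the three assertions in the order (3), then (1), then (2). Granting the vanishing in (3), both (1) and (2) fall out of the long exact sequence of $\Tor$. For (1): if $0\to K\to T'\to T''\to 0$ is exact with $T',T''\in\T$, then for every $S\in\clS$ the segment $\Tor_2^R(T'',S)\to\Tor_1^R(K,S)\to\Tor_1^R(T',S)$ has both outer terms zero (the left by (3), the right since $T'\in\T$), so $\Tor_1^R(K,S)=0$ and $K\in\T$; as $\T$ contains the projectives (projective $\Rightarrow$ flat $\Rightarrow\Tor_1^R(-,S)=0$) and is closed under extensions by the same long exact sequence, $\T$ is resolving, which is what ``hereditary'' means. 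Symmetrically, for (2) an exact sequence $0\to K\to S'\to S''\to 0$ with $S',S''\in\clS$ yields $\Tor_2^R(T,S'')\to\Tor_1^R(T,K)\to\Tor_1^R(T,S')$ with both ends zero, whence $K\in\clS$; together with the fact that $\clS$ contains the left projectives and is closed under extensions, this makes $\clS$ resolving.

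Everything therefore reduces to proving (3) unconditionally, and here I would use character-module duality. Writing $N^{+}=\Hom_{\bbZ}(N,\bbQ/\bbZ)$, there is a natural isomorphism $\Tor_n^R(T,S)^{+}\cong\Ext^n_R(T,S^{+})$, and since $\bbQ/\bbZ$ is an injective cogenerator, $\Tor_n^R(T,S)=0$ if and only if $\Ext^n_R(T,S^{+})=0$; for $n=1$ this is exactly the defining condition of the Tor-pair. To climb to higher $n$ I would dualize a projective resolution $\cdots\to Q_1\to Q_0\to S\to 0$: because $(-)^{+}$ is exact and each $Q_i^{+}$ is injective (a flat left module has injective character module), the complex $0\to S^{+}\to Q_0^{+}\to Q_1^{+}\to\cdots$ is an injective coresolution of $S^{+}$ whose cosyzygies are the character modules $(\Syz{i}{S})^{+}$. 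Dimension shifting along it gives $\Ext^n_R(T,S^{+})\cong\Ext^1_R(T,(\Syz{n-1}{S})^{+})\cong\Tor_1^R(T,\Syz{n-1}{S})^{+}$, so that $\Tor_n^R(T,S)=0$ is equivalent to $\Tor_1^R(T,\Syz{n-1}{S})=0$.

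Consequently (3) is equivalent to the assertion that every syzygy $\Syz{i}{S}$ of a module $S\in\clS$ again lies in $\clS$, that is, that $\clS$ is closed under syzygies; this is the step I expect to be the main obstacle. The difficulty is genuine and the naive approach is circular: passing from $\Tor_1^R$ to $\Tor_2^R$ (equivalently, from $\Ext^1_R(T,S^{+})=0$ to $\Ext^2_R(T,S^{+})=0$, or from $\Tor_1^R(T,S)=0$ to $\Tor_1^R(\Syz{1}{T},S)=0$) is precisely the closure one is trying to establish. To break the circle I would not argue by dimension shifting but would instead exploit the structure of the two classes directly. By the argument of the first Proposition, read with the two sides interchanged, $\clS$ is closed under direct limits, pure submodules and $\T$-pure quotients, and in particular contains all flat left modules; dually $\T$ is closed under direct limits and pure submodules.

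To finish (3) I would combine these closure properties with the pure-injectivity of each character module $S^{+}$. The condition $\Ext^1_R(T,S^{+})=0$ is a vanishing against a \emph{pure-injective} target tested on the \emph{direct-limit-closed} class $\T$, and my plan is to reduce the test modules $T$ to finitely presented ones lying in a generating subset of $\T$ and then propagate the vanishing to the cosyzygy $(\Syz{1}{S})^{+}=\Omega^{-1}(S^{+})$ by a purity argument, showing $\Omega^{-1}(S^{+})$ again lies in the right $\Ext^1_R$-orthogonal of $\T$; an induction then yields $\Ext^n_R(T,S^{+})=0$ for all $n$, hence (3). I expect the verification that this orthogonality is inherited by cosyzygies of pure-injectives — equivalently, that the closure of $\clS$ under syzygies really does follow from definability-type properties rather than from the circular dimension shift — to be the crux of the whole argument, and the place where the special behaviour of $\Tor$ (as opposed to an arbitrary half-exact bifunctor) must be used.
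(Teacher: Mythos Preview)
You have misread the proposition: it is a \emph{characterization} (``the following are equivalent''), not a list of three assertions that hold for every Tor-pair. The paper introduces ``hereditary'' as a genuine restriction (meaning $\T$ is resolving), and the proposition is the Tor-pair analogue of the standard TFAE for hereditary cotorsion pairs. Your plan to prove (3) unconditionally and then derive (1) and (2) is therefore aiming at a false target: there exist Tor-pairs that are not hereditary, and for such pairs (3) fails. This is exactly why your attempt stalls at ``$\clS$ is closed under syzygies'' --- that closure is not a consequence of the Tor-pair axioms alone, and no amount of purity or character-module manoeuvring will manufacture it. The circularity you detected is real and unbreakable, because the statement you are trying to prove is not true in general.

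What the paper actually does is prove the cycle $(1)\Rightarrow(3)\Rightarrow(1)$ and then observe $(2)\Leftrightarrow(3)$ by symmetry. Your arguments for $(3)\Rightarrow(1)$ and $(3)\Rightarrow(2)$ via the long exact sequence are correct and essentially identical to the paper's. The piece you are missing is the easy direction $(1)\Rightarrow(3)$: if $\T$ is resolving then every syzygy of a module in $\T$ again lies in $\T$, so dimension shifting gives $\Tor_n^R(T,S)\cong\Tor_1^R(\Syz{n-1}{T},S)=0$. That is the entire content of the implication; no character modules or pure-injectivity are needed anywhere in the proof.
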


\begin{proof}
  (1) $\Rightarrow$ (3). From (1) follows that all syzygies of any
  module in $\mathcal T$ belong to $\mathcal T$. Then (3) is a
  consequence of \cite[Corollary 6.23]{Rotman}.

  (3) $\Rightarrow$ (1). Given a short exact sequence
  \begin{displaymath}
    \begin{tikzcd}
      0 \arrow{r} & K \arrow{r} & T_1 \arrow{r} & T_2 \arrow{r} & 0
    \end{tikzcd}
  \end{displaymath}
  with $T_1, T_2 \in \mathcal T$, the induced long exact sequence when
  tensoring with any $S \in \mathcal S$ gives an isomorphism
  $\Tor_2^R(T_2,S) \cong \Tor_1^R(K,S)$. Then (3) gives that
  $\Tor_1^R(K,S)=0$ and $K \in \mathcal S$.

  (3) $\Leftrightarrow$ (2). Follows from the previous proof, since
  (3) is left-right symmetric.
\end{proof}

\begin{prop}
  Let $(\mathcal T,\mathcal S)$ be a hereditary Tor-pair. Then
  $\mathcal T$ is closed under $\mathcal S$-pure submodules.
\end{prop}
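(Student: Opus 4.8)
The plan is to deduce membership of the quotient from the $\mathcal S$-purity hypothesis, and then to conclude by invoking that $\mathcal T$ is resolving.

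Concretely, let
\begin{displaymath}
  \begin{tikzcd}
    0 \arrow{r} & A \arrow{r}{f} & B \arrow{r}{g} & C \arrow{r} & 0
  \end{tikzcd}
\end{displaymath}
be an $\mathcal S$-pure exact sequence with $B \in \mathcal T$; I want to show $A \in \mathcal T$. The first step is to check that $C \in \mathcal T$. Fix $S \in \mathcal S$ and apply $-\otimes_R S$, obtaining the exact sequence
\begin{displaymath}
  \begin{tikzcd}
    \Tor_1^R(B,S) \arrow{r} & \Tor_1^R(C,S) \arrow{r} & A\otimes_R S \arrow{r}{f \otimes S} & B \otimes_R S.
  \end{tikzcd}
\end{displaymath}
Since $B \in \mathcal T$ the leftmost term vanishes, and since the sequence is $\mathcal S$-pure the map $f \otimes S$ is monic. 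Exactness then forces $\Tor_1^R(C,S) = 0$, and as $S \in \mathcal S$ was arbitrary, $C \in \mathcal T$.

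For the second step, recall that the Tor-pair is hereditary, so by Proposition \ref{p:Hereditary} the class $\mathcal T$ is resolving; in particular it is closed under kernels of epimorphisms. Applying this to $g \colon B \to C$ with $B, C \in \mathcal T$ gives $A \cong \Ker g \in \mathcal T$, which is what we wanted. (Alternatively, one could avoid the explicit appeal to ``resolving'' and instead extend the displayed long exact sequence one step further to $\Tor_2^R(C,S) \to \Tor_1^R(A,S) \to \Tor_1^R(B,S)$, using Proposition \ref{p:Hereditary}(3) to kill the outer terms; but going through $C$ is cleaner.)

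I do not expect a genuine obstacle here: the only substantive observation is that $\mathcal S$-purity of the given sequence is precisely the input needed to force $\Tor_1^R(C,S)$ to vanish, even though we have no a priori handle on $C$ itself, after which heredity does the rest.
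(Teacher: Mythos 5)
Your proof is correct, but it takes a genuinely different route from the paper's. You factor the problem through the quotient: first you show $C=B/A\in\mathcal T$ --- this is exactly the paper's earlier observation that $\mathcal T$ is closed under $\mathcal S$-pure quotients, which holds for an arbitrary Tor-pair and which you could simply have cited --- and then you invoke heredity on the $\mathcal T$ side, namely that $\mathcal T$ is resolving and hence closed under kernels of epimorphisms between modules in $\mathcal T$. The paper instead uses heredity on the $\mathcal S$ side: it takes a projective presentation $0\to S'\to F\to S\to 0$, notes that $S'\in\mathcal S$ by Proposition~\ref{p:Hereditary}, and uses the $\mathcal S$-purity of $K\leq T$ tested against $S'$ to embed $\Tor_1^R(K,S)$ into $\Tor_1^R(T,S)=0$ in a single diagram chase modelled on \cite[Proposition 9.12]{AngeleriHerbera}. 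The two arguments are equivalent in strength, since by Proposition~\ref{p:Hereditary} resolvingness of $\mathcal T$ and of $\mathcal S$ are equivalent; yours has the merit of isolating exactly where heredity enters (your first step needs none of it), while the paper's is more self-contained in that it never mentions the quotient module at all.
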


\begin{proof}
  We can argue as in \cite[Proposition 9.12]{AngeleriHerbera}. Let $T$
  be a module in $\mathcal T$ and $K$ a $\mathcal S$-pure submodule of
  $T$. Let $S$ be any module in $\mathcal S$ and take
  \begin{displaymath}
    \begin{tikzcd}
      0 \arrow{r} & S' \arrow{r} & F \arrow{r} & S \arrow{r} & 0
    \end{tikzcd}
  \end{displaymath}
  a projective presentation of $S$. We can construct the following commutative
  diagram with exact rows:
  \begin{displaymath}
    \begin{tikzcd}
      0 \arrow{r} & \Tor_1^R(K,S) \arrow{r} \arrow{d}{f} & K \otimes_R S' \arrow{r} \arrow{d}{g} & K \otimes_R F \arrow{d}\\
      0 \arrow{r} & \Tor_1^R(T,S) \arrow{r} & T \otimes_R S' \arrow{r} & T \otimes_R F\\
    \end{tikzcd}
  \end{displaymath}
  Now $g$ is monic as the inclusion $K \rightarrow T$ is
  $\mathcal S$-pure and $S' \in \mathcal S$ by Proposition
  \ref{p:Hereditary}. Then $f$ is monic and $\Tor_1^R(K,S)=0$ since
  $\Tor_1^R(T,S)=0$. Because $S$ is arbitrary, we conclude that
  $K \in \mathcal T$.
\end{proof}

\begin{expls}
  \begin{enumerate}
  \item The pair of classes $(\Flat, \RMod)$ is a hereditary Tor-pair.

  \item Recall that a left $R$-module $C$ is cyclically presented
    provided that $C \cong \frac{R}{Rx}$ for some $x \in R$. A right
    module $X$ satisfying $\Tor_1^R(X,C)=0$ for each cyclically
    presented left module $C$ is called \textrm{torsion-free}. We
    shall denote by $\TFree$ the class consisting of all torsion-free
    right modules. Then $(\TFree,\TFree^\top)$ is a Tor-pair.
  \end{enumerate}
\end{expls}

We shall use the homological notation for projective resolutions so
that, for a given a right $R$-module $M$, a projective resolution
of $M$ will be denoted
\begin{displaymath}
  \begin{tikzcd}
    \cdots \arrow{r} & P_1 \arrow{r}{d_1} & P_0 \arrow{r}{d_0} & M
    \arrow{r} & 0
  \end{tikzcd}
\end{displaymath}
Then the $nth$-syzygy of $M$ will be $\Ker d_n$ for each natural
number $n$.

\begin{defn}\label{d:RelativeDimension}
  Let $\mathcal X$ be a class of left $R$-modules containing all
  projective modules.
  \begin{enumerate}
  \item Given a nonzero natural number $n$ and a left $R$-module $M$,
    we shall say that $M$ has projective dimension relative to
    $\mathcal X$ (or $\mathcal X$-projective dimension) less than or
    equal to $n$ (written $\pd_{\mathcal X}(M) \leq n$) if there exists
    a projective resolution of $M$ such that its $(n-1)st$ syzygy
    belongs to $\mathcal X$. We shall denote by $\mathcal X_n$ the
    class of all modules with $\mathcal X$-projective dimension less
    than or equal to $n$ (if $n=0$, $\mathcal X_0$ will be
    $\mathcal X$). Moreover, we shall denote
    $\mathcal X_\omega = \bigcup_{n < \omega}\mathcal X_n$.

  \item Given a left $R$-module $M$ the $\mathcal X$-projective
    dimension of $M$ is
    \begin{displaymath}
      \pd_{\mathcal X}(M) = \min\left(\{m < \omega: M \in \mathcal X_m\}
      \cup \{\omega\}\right)
    \end{displaymath}
  \end{enumerate}
\end{defn}

Note that if $\mathcal X$ is closed under direct summands and finite
direct sums, the $\mathcal X$-projective dimension does not depend on
the chosen projective resolution, since, for each natural number $n$,
any two $n$-sysygies of a module are projectively equivalent by
\cite[Proposition 8.5]{Rotman}.

\begin{defn}
  Let $\mathcal X$ and $\mathcal Y$ be a class of left modules such
  that $\mathcal X$ contains all
  projective modules. The
  $\mathcal X$-projective dimension of $\mathcal Y$ is
  \begin{displaymath}
    \pd_{\mathcal X}(\mathcal Y) =  \sup \{\pd_{\mathcal X}(Y):Y \in
    \mathcal Y\}
  \end{displaymath}
\end{defn}

Note that $\pd_{\Proj}(\ModR)$ is the right global dimension of the
ring and $\pd_{\Proj}(\Proj_{\omega})$ is the right finitistic
projective dimension of the ring, i. e., the supremmun of the
projective dimensions of all modules with finite projective
dimension. For a general class of right modules $\mathcal X$ containing
all projective modules, $\pd_{\mathcal X}(\mathcal X_\omega)$ is
called in \cite{BazzoniCortesEstrada} the left finitistic
$\mathcal X$-projective dimension of $R$ and is denoted there by
$\FPD_\mathcal{X}(R)$.

As it is proved in \cite[Lemma 3.9]{Izurdiaga} using an argument from
\cite[Corollary VII.2.6]{BeligiannisReiten}, when $\mathcal Y$ is
closed under countable direct sums or countable direct products, we only have to
see that each module in $\mathcal Y$ has finite
$\mathcal X$-projective dimension in order to get that
$\pd_{\mathcal X}(\mathcal Y)$ is finite.

\begin{lem} \label{l:FinitisticDimensions} \cite[Lemma 3.9]{Izurdiaga}
  Let $\mathcal X$ and $\mathcal Y$ be classes of left $R$-modules
  such that $\mathcal X$ is closed under direct summands, finite
  direct sums and contains all projecive modules, and $\mathcal Y$ is
  closed under countable direct sums or countable direct
  products. Then the following assertions are equivalent:
  \begin{enumerate}
  \item $\pd_{\mathcal X}(Y)$ is finite for each $Y \in \mathcal Y$.

  \item $\pd_{\mathcal X}(\mathcal Y)$ is finite.
  \end{enumerate}
\end{lem}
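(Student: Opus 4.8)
The implication (2) $\Rightarrow$ (1) is immediate from the definition of $\pd_{\mathcal X}(\mathcal Y)$ as a supremum, so the content is in (1) $\Rightarrow$ (2). The plan is to argue by contradiction: suppose $\pd_{\mathcal X}(\mathcal Y) = \omega$ while every individual $Y \in \mathcal Y$ has finite $\mathcal X$-projective dimension. Then for each natural number $n$ there is some $Y_n \in \mathcal Y$ with $\pd_{\mathcal X}(Y_n) > n$, i.e.\ $Y_n \notin \mathcal X_n$. One then builds a single module out of the $Y_n$ that is forced to have infinite $\mathcal X$-projective dimension, contradicting the hypothesis. Since $\mathcal Y$ is closed under countable direct sums \emph{or} countable direct products, the natural candidates are $Y = \bigoplus_{n<\omega} Y_n$ in the first case and $Y = \prod_{n<\omega} Y_n$ in the second; in both cases $Y \in \mathcal Y$, and one shows $\pd_{\mathcal X}(Y) = \omega$.

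\textbf{Key steps.} First I would record the elementary fact that $\mathcal X_n$ is closed under direct summands and under finite (hence, using closure under direct summands, arbitrary) products and coproducts whenever these exist, because $\mathcal X$ is closed under direct summands and finite direct sums: if $M = \prod_i M_i$ (or $\bigoplus_i M_i$), picking a projective resolution of each $M_i$ and taking the product (resp.\ coproduct) gives a projective resolution of $M$ whose $(n-1)$st syzygy is the product (resp.\ coproduct) of the $(n-1)$st syzygies of the $M_i$; and the relevant closure property of $\mathcal X$ — closure under arbitrary products, resp.\ arbitrary coproducts — must be derived. Here one uses that $\mathcal X$, being the left-hand class of a cotorsion pair or at least closed under direct summands, is actually closed under arbitrary direct sums in the coproduct case; in the product case one needs closure under arbitrary products, which is where the argument of \cite[Corollary VII.2.6]{BeligiannisReiten} / \cite[Lemma 3.9]{Izurdiaga} enters. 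The crucial point is the \emph{converse} direction: if $\prod_{n} Y_n \in \mathcal X_m$ for some fixed $m$, then since $Y_n$ is a direct summand of the product (via the canonical split projection — wait, the projection $\prod Y_n \to Y_n$ is split), each $Y_n \in \mathcal X_m$ by closure under direct summands, so $\pd_{\mathcal X}(Y_n) \le m$ for all $n$, contradicting $\pd_{\mathcal X}(Y_n) > n \ge m$ once $n \ge m$. Symmetrically, $Y_n$ is a split summand of $\bigoplus_n Y_n$, giving the same contradiction in the coproduct case.

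\textbf{Main obstacle.} The delicate point is closure of $\mathcal X_n$ under arbitrary direct products (for the direct-product case of $\mathcal Y$), since $\mathcal X$ itself need not be closed under arbitrary products. This is exactly the subtlety handled by the Beligiannis–Reiten-style argument cited before the lemma: one does not need $\prod Y_n \in \mathcal X_m$ for the same $m$ that bounds each factor; rather, one observes that if each $Y_n \in \mathcal X_{k_n}$ with the $k_n$ \emph{unbounded}, then no single $m$ can work for the product, and conversely a uniform bound on the $\mathcal X$-projective dimensions of the factors is what is needed to put the product in $\mathcal X_m$. So the real work is: (a) reduce to the case of a countable family with \emph{bounded} $\mathcal X$-projective dimension, and (b) show that a countable product (resp.\ sum) of modules of $\mathcal X$-projective dimension $\le m$ again has $\mathcal X$-projective dimension $\le m$, using that the $(m-1)$st syzygy of the product (resp.\ sum) is a product (resp.\ sum) of modules in $\mathcal X$ together with whatever closure property of $\mathcal X$ is available. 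Since the lemma is quoted verbatim from \cite[Lemma 3.9]{Izurdiaga}, the cleanest exposition is to reproduce that short argument: assume $\pd_{\mathcal X}(\mathcal Y)=\omega$, extract $Y_n \in \mathcal Y$ with $\pd_{\mathcal X}(Y_n) \to \omega$, form $Y = \bigoplus Y_n$ or $\prod Y_n$, note $Y \in \mathcal Y$, and derive that for no $m$ can $Y$ lie in $\mathcal X_m$ because otherwise every split summand $Y_n$ would lie in $\mathcal X_m$, contradicting the unboundedness; hence $\pd_{\mathcal X}(Y)=\omega$, contradicting (1).
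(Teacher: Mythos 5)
Your final distilled argument is correct, and it is the intended one (the paper does not reprove this lemma but quotes it from \cite[Lemma 3.9]{Izurdiaga}): assuming (1) but not (2), pick $Y_n\in\mathcal Y$ with $\pd_{\mathcal X}(Y_n)>n$, form the countable direct sum or direct product $Y$ of the $Y_n$, which lies in $\mathcal Y$ by hypothesis, and note that $Y\in\mathcal X_m$ would force every $Y_n$ into $\mathcal X_m$, since each $Y_n$ is a direct summand of $Y$ and $\mathcal X_m$ is closed under direct summands. That last closure property is exactly where the hypotheses on $\mathcal X$ enter, via the projective equivalence of syzygies: writing $Y=Y_n\oplus Y_n'$ and taking the direct sum of two projective resolutions, the $(m-1)$st syzygy of $Y$ is, up to adding projective summands, the direct sum of the syzygies of $Y_n$ and $Y_n'$, and closure of $\mathcal X$ under finite direct sums, direct summands and projectives finishes the argument. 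So $\pd_{\mathcal X}(Y)=\omega$, contradicting (1).

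That said, you should strike the surrounding material in your ``Key steps'' and ``Main obstacle'' paragraphs, which is both unnecessary and partly false as stated. A product of projective resolutions is not in general a projective resolution (products of projectives need not be projective), so your claimed closure of $\mathcal X_n$ under arbitrary products does not follow from the stated hypotheses and is not available; likewise, closure of $\mathcal X$ under finite direct sums and summands does not give closure under arbitrary direct sums. None of this is needed: the only implication ever used is the one you isolate at the end (a direct summand of a module in $\mathcal X_m$ lies in $\mathcal X_m$), and it requires only a \emph{two}-term direct sum of resolutions. The Beligiannis--Reiten/product-closure considerations you invoke belong to Theorem \ref{t:MainTheorem} and Proposition \ref{p:FiniteDimensionTorPairs}, not to this lemma.
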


For cotorsion pairs and Tor-pairs we can compute relative dimensions
using $\Ext$ and $\Tor$ functors respectively.

\begin{lem}\label{l:DimensionTor}
  Let $n$ be a natural number and $M$ a right $R$-module.
  \begin{enumerate}
  \item If $(\mathcal F, \mathcal C)$ is a cotorsion pair in $\ModR$,
    then $\pd_{\mathcal F}(M)\leq n$ if and only if $\Ext^{n+1}_R(M,C)=0$
    for each $C \in \mathcal C$. Moreover
    \begin{displaymath}
      \pd_{\mathcal F}(M)=\min\{n \leq \omega:\Ext^{n+1}_R(M,C)=0 \quad
      \forall C \in \mathcal C\}
    \end{displaymath}

  \item If $(\mathcal T,\mathcal S)$ is a Tor-pair, then
    $\pd_{\mathcal T}(M) \leq n$ if and only if $\Tor_{n+1}^R(M,S)=0$
    for each $S \in \mathcal S$. Moreover,
    \begin{displaymath}
      \pd_{\mathcal T}(M) = \min\{n \leq \omega:\Tor_{n+1}^R(M,S)=0
      \quad \forall S \in \mathcal S\}.
    \end{displaymath}
  \end{enumerate}
\end{lem}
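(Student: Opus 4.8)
The plan is to prove both parts by the same argument, so I focus on part (2); part (1) is the standard cotorsion-pair analogue and goes through verbatim with $\Ext$ in place of $\Tor$ and the defining orthogonality $\Ext^1_R(\mathcal F,\mathcal C)=0$ in place of $\Tor_1^R(\mathcal T,\mathcal S)=0$. The key input is Proposition \ref{p:Hereditary}, which tells us that a Tor-pair is automatically hereditary, so $\Tor_k^R(T,S)=0$ for all $T\in\mathcal T$, $S\in\mathcal S$ and all $k\geq 1$; this dimension shift is what makes everything work.

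First I would fix a projective resolution of $M$,
\begin{displaymath}
  \begin{tikzcd}
    \cdots \arrow{r} & P_1 \arrow{r}{d_1} & P_0 \arrow{r}{d_0} & M \arrow{r} & 0,
  \end{tikzcd}
\end{displaymath}
and write $\Syz{k}{M}=\Ker d_{k-1}$ for its $k$th syzygy (with $\Syz{0}{M}=M$). Breaking the resolution into short exact sequences $0\to\Syz{k+1}{M}\to P_k\to\Syz{k}{M}\to 0$ and tensoring with an arbitrary $S\in\mathcal S$, the long exact sequence gives the dimension-shifting isomorphisms $\Tor_{j+1}^R(\Syz{k}{M},S)\cong\Tor_j^R(\Syz{k+1}{M},S)$ for all $j\geq 1$ (the $P_k$ terms vanish). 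Iterating, $\Tor_{n+1}^R(M,S)\cong\Tor_1^R(\Syz{n}{M},S)$ for every $n\geq 1$; for $n=0$ the statement $\pd_{\mathcal T}(M)\leq 0$ means $M\in\mathcal T$, which by definition of a Tor-pair is exactly $\Tor_1^R(M,S)=0$ for all $S\in\mathcal S$, consistent with the formula. Now suppose $\pd_{\mathcal T}(M)\leq n$: by Definition \ref{d:RelativeDimension} there is a projective resolution whose $(n-1)$st syzygy lies in $\mathcal T$. One must first observe that the choice of resolution does not matter here — any two $n$th syzygies of $M$ are projectively equivalent by \cite[Proposition 8.5]{Rotman}, and since $\mathcal T$ is closed under direct summands and finite direct sums (being closed under direct limits by the first Proposition, hence in particular under finite direct sums, and closed under summands as a left $\Tor$-orthogonal class), membership of the $(n-1)$st syzygy in $\mathcal T$ is resolution-independent. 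Wait — more carefully: $\pd_{\mathcal T}(M)\leq n$ asks that the $(n-1)$st syzygy be in $\mathcal T=\mathcal T_0$, equivalently that the $n$th syzygy be in $\mathcal T_1$; in either reading, for our fixed resolution we get $\Syz{n-1}{M}\in\mathcal T$, whence $\Tor_1^R(\Syz{n-1}{M},S)=0$ for all $S\in\mathcal S$, and then $\Tor_{n+1}^R(M,S)\cong\Tor_2^R(\Syz{n-1}{M},S)\cong\Tor_1^R(\Syz{n}{M},S)$; but $\Syz{n-1}{M}\in\mathcal T$ and heredity (Proposition \ref{p:Hereditary}(3)) give $\Tor_2^R(\Syz{n-1}{M},S)=0$ directly. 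So $\Tor_{n+1}^R(M,S)=0$ for all $S\in\mathcal S$.

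Conversely, suppose $\Tor_{n+1}^R(M,S)=0$ for all $S\in\mathcal S$. By the dimension-shift isomorphism $\Tor_1^R(\Syz{n}{M},S)\cong\Tor_{n+1}^R(M,S)=0$, so $\Syz{n}{M}\in{}^\top\mathcal S=\mathcal T$; equivalently the $(n-1)$st syzygy of the shifted resolution witnesses $\pd_{\mathcal T}(M)\leq n$ (one re-indexes: an $n$th syzygy lying in $\mathcal T$ is precisely the $(n-1)$st syzygy of $M$ for the shifted statement, or one simply notes $\Syz{n}{M}\in\mathcal T$ means $\pd_{\mathcal T}(\Syz{n}{M})=0$, hence $\pd_{\mathcal T}(M)\leq n$ by splicing). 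This proves the equivalence $\pd_{\mathcal T}(M)\leq n\iff\Tor_{n+1}^R(M,S)=0\ \forall S\in\mathcal S$. The ``moreover'' formula for $\pd_{\mathcal T}(M)$ is then immediate: $\pd_{\mathcal T}(M)$ is by definition the least $m<\omega$ with $M\in\mathcal T_m$ (or $\omega$ if none exists), which by the equivalence just proved is the least $m$ with $\Tor_{m+1}^R(M,S)=0$ for all $S$, i.e. $\min\{n\leq\omega:\Tor_{n+1}^R(M,S)=0\ \forall S\in\mathcal S\}$, with the convention that the min over the empty set is $\omega$.

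The only genuinely delicate point — the main obstacle — is the bookkeeping around Definition \ref{d:RelativeDimension}: that definition phrases $\pd_{\mathcal T}(M)\leq n$ in terms of the $(n-1)$st syzygy belonging to $\mathcal T$, and I must make sure the off-by-one indexing is handled consistently with the convention $\mathcal T_0=\mathcal T$ and with the remark (just before Definition \ref{d:RelativeDimension}) that the dimension is well-defined independently of the resolution when $\mathcal T$ is closed under summands and finite direct sums — both of which hold for the left-orthogonal class $\mathcal T$ of a Tor-pair. Once the indexing is pinned down, everything reduces to the dimension-shift long exact sequence for $\Tor$ together with heredity, both already available in the excerpt.
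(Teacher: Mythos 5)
Your argument is essentially the paper's: both proofs reduce $\Tor_{n+1}^R(M,S)$ to $\Tor_1^R$ of a syzygy by dimension shifting and then invoke $\mathcal T={}^\top\mathcal S$; the paper merely cites \cite[Corollary 6.23]{Rotman} for the iterated shift that you rederive from the long exact sequences. Two points need correction, however. First, your announced ``key input'' is a misreading: Proposition \ref{p:Hereditary} is an equivalence of three conditions (its proof establishes (1)$\Leftrightarrow$(3)$\Leftrightarrow$(2)), not an assertion that every Tor-pair is hereditary --- indeed the paper takes heredity as an extra hypothesis in Theorem \ref{t:MainTheorem} and elsewhere. Since Lemma \ref{l:DimensionTor}(2) is stated for an arbitrary Tor-pair, an argument genuinely depending on heredity would not prove it; fortunately yours does not, because the dimension-shift isomorphisms use only that the $P_k$ are flat. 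Second, the forward direction has an off-by-one slip. In the paper's conventions the $(n-1)$st syzygy is $\Ker d_{n-1}$, which in your indexing is $\Omega^n(M)$, not $\Omega^{n-1}(M)$; thus $\pd_{\mathcal T}(M)\leq n$ gives $\Omega^n(M)\in\mathcal T$ and the conclusion $\Tor_{n+1}^R(M,S)\cong\Tor_1^R(\Omega^n(M),S)=0$ is immediate. The premise $\Omega^{n-1}(M)\in\mathcal T$ from which you actually argue does not follow from $\pd_{\mathcal T}(M)\leq n$ (it is the stronger statement $\pd_{\mathcal T}(M)\leq n-1$), and it is only this misplaced premise that forces you to invoke $\Tor_2^R(\Omega^{n-1}(M),S)=0$ and hence heredity. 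Your converse direction uses the correct correspondence; once the indexing is fixed throughout, the proof is correct and coincides with the paper's.
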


\begin{proof}
  Both proofs are similar. We shall prove (2). Let $K_{n-1}$ be a $(n-1)th$-syzygy of $M$. Then
  $\pd_{\mathcal T}(M)=n$ if and only if $K_{n-1} \in \mathcal T$ if
  and only if $\Tor_1^R(K_{n-1},S)=0$ for each $S \in \mathcal S$. But
  by \cite[Corollary]{Rotman} this is equivalent to $\Tor^R_{n+1}(M,S)=0$
  for each $S \in \mathcal S$.
\end{proof}

Using this result it is easy to compute the dimension of the third
module in a short exact sequence in the same way as it can be done for
the classical projective dimension \cite[Exercise 8.5]{Rotman}. We
shall use this result later.

\begin{prop}\label{p:DimensionInExactSequences}
  Let $(\mathcal T,\mathcal S)$ be a Tor-pair and
  \begin{displaymath}
    \begin{tikzcd}
      0 \arrow{r} & A \arrow{r} & B \arrow{r} & C \arrow{r} & 0
    \end{tikzcd}
  \end{displaymath}
  a short exact sequence of right modules. Then:
  \begin{enumerate}
  \item If $\pd_{\mathcal T}(B) < \pd_{\mathcal T}(A)$ then $\pd_{\mathcal T}(C) = \pd_{\mathcal T}(A)+1$.

  \item If $\pd_{\mathcal T} (B) > \pd_{\mathcal T}(A)$ then $\pd_{\mathcal T}(C) = \pd_{\mathcal T}(B)$.

    \item If $\pd_{\mathcal T}(B) = \pd_{\mathcal T}(A)$, then $\pd_{\mathcal T}(C) \leq \pd_{\mathcal T}(A)+1$.
  \end{enumerate}
\end{prop}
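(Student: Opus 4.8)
The plan is to reduce everything to the long exact sequence in $\Tor$ combined with the characterization of relative dimension given in Lemma~\ref{l:DimensionTor}(2), namely that $\pd_{\mathcal T}(M)$ is the least $n\le\omega$ with $\Tor_{n+1}^R(M,S)=0$ for all $S\in\mathcal S$. First I would dispose of trivial cases: if $\pd_{\mathcal T}(A)=\omega$ then case (1) is vacuous, and if $\pd_{\mathcal T}(B)=\omega$ then in case (2) the long exact sequence forces $\Tor_{n+1}^R(C,S)\cong\Tor_{n+1}^R(B,S)$ for large $n$ (since the $A$-terms vanish once $n>\pd_{\mathcal T}(A)$ is not enough — rather one uses that the map $\Tor_{n+2}(C,S)\to\Tor_{n+1}(A,S)$ and $\Tor_{n+1}(A,S)\to\Tor_{n+1}(B,S)$ behave as below), so $\pd_{\mathcal T}(C)=\omega=\pd_{\mathcal T}(B)$; I would handle these in a line each so that afterwards all three dimensions in play may be assumed finite.

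Next, fix $S\in\mathcal S$ and write the piece of the long exact sequence
\begin{displaymath}
  \begin{tikzcd}[column sep=small]
    \Tor_{k+1}^R(B,S) \arrow{r} & \Tor_{k+1}^R(C,S) \arrow{r} & \Tor_k^R(A,S) \arrow{r} & \Tor_k^R(B,S)
  \end{tikzcd}
\end{displaymath}
for $k\ge 1$. Set $a=\pd_{\mathcal T}(A)$, $b=\pd_{\mathcal T}(B)$. For case (1), $b<a$: take $k=a+1$; then $\Tor_{k+1}^R(B,S)=\Tor_{a+2}^R(B,S)=0$ and $\Tor_k^R(B,S)=\Tor_{a+1}^R(B,S)=0$ because $a+1>b$, so $\Tor_{a+2}^R(C,S)\cong\Tor_{a+1}^R(A,S)$. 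Hence $\pd_{\mathcal T}(C)\le a+1$, with equality iff $\Tor_{a+1}^R(A,S)\ne 0$ for some $S$; but that is exactly the statement $\pd_{\mathcal T}(A)=a$, i.e. $\pd_{\mathcal T}(A)\not\le a-1$, which holds by definition of $a$ (here one must also note $\Tor_{a+2}^R(C,S)$ can be nonzero only through this isomorphism, and check the degrees $>a+2$ similarly vanish, which is immediate since all the $A,B$ terms do). For case (2), $b>a$: take any $k$ with $k\ge a+1$; then $\Tor_k^R(A,S)=0$, so $\Tor_{k+1}^R(B,S)\twoheadrightarrow\Tor_{k+1}^R(C,S)$ is surjective, and taking $k\ge a+1$ also $\Tor_{k+1}^R(A,S)=0$ gives injectivity from the previous four-term piece, whence $\Tor_{k+1}^R(B,S)\cong\Tor_{k+1}^R(C,S)$ for all $k\ge a+1$; since $b\ge a+1$ this identifies the vanishing threshold of $C$ with that of $B$, giving $\pd_{\mathcal T}(C)=b$. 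For case (3), $a=b$: take $k=a+1>a=b$, so $\Tor_{k+1}^R(B,S)=0$ and the exact sequence yields an injection $\Tor_{a+2}^R(C,S)\hookrightarrow\Tor_{a+1}^R(A,S)=0$, so $\pd_{\mathcal T}(C)\le a+1$.

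The routine-calculation part is the bookkeeping with indices and the observation that for $n\ge a+1$ (resp.\ $\ge b+1$) the relevant $\Tor$ groups of $A$ and $B$ vanish in the right degrees; the only genuine subtlety is making sure in case (1) that the equality $\pd_{\mathcal T}(C)=a+1$ is not accidentally smaller, which is where one invokes that $a=\pd_{\mathcal T}(A)$ means $\Tor_{a+1}^R(A,S_0)\ne 0$ for some $S_0\in\mathcal S$, and then that $\Tor_{a+2}^R(C,S_0)\cong\Tor_{a+1}^R(A,S_0)\ne 0$. I expect no real obstacle here: this is the exact analogue of the classical computation \cite[Exercise 8.5]{Rotman} for projective dimension, transported verbatim through Lemma~\ref{l:DimensionTor}(2), and the hereditary hypothesis on $(\mathcal T,\mathcal S)$ guarantees (via Proposition~\ref{p:Hereditary}) that higher $\Tor$'s into $\mathcal S$ detect $\mathcal T$-projective dimension correctly in all degrees.
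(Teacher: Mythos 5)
Your overall strategy is the paper's own: feed the long exact sequence in $\Tor$ through Lemma~\ref{l:DimensionTor}(2). Cases (2) and (3) essentially go through, with one small caveat in case (2): when $b=a+1$ your isomorphisms $\Tor_{k+1}^R(B,S)\cong\Tor_{k+1}^R(C,S)$ for $k\ge a+1$ only start in degree $a+2=b+1$, so to see $\Tor_b^R(C,S_B)\neq 0$ you still need the injection $\Tor_b^R(B,S_B)\hookrightarrow\Tor_b^R(C,S_B)$ coming from $\Tor_b^R(A,S_B)=\Tor_{a+1}^R(A,S_B)=0$; that is the same four-term piece, just one degree below the range you state.

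The genuine problem is the lower bound in case (1). You assert that $\pd_{\mathcal T}(C)=a+1$ holds iff $\Tor_{a+1}^R(A,S)\neq 0$ for some $S$, and later that ``$a=\pd_{\mathcal T}(A)$ means $\Tor_{a+1}^R(A,S_0)\neq 0$ for some $S_0$.'' By Lemma~\ref{l:DimensionTor}(2) the opposite is true: $\pd_{\mathcal T}(A)=a$ means $\Tor_{a+1}^R(A,S)=0$ for \emph{all} $S$ (and $\Tor_a^R(A,S_0)\neq 0$ for some $S_0$; that is what $\pd_{\mathcal T}(A)\not\le a-1$ says). Consequently your claimed nonvanishing $\Tor_{a+2}^R(C,S_0)\cong\Tor_{a+1}^R(A,S_0)\neq 0$ is false, and it directly contradicts the upper bound $\pd_{\mathcal T}(C)\le a+1$ that you derived two lines earlier from the very same isomorphism. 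The repair is to drop one degree: take $S_0$ with $\Tor_a^R(A,S_0)\neq 0$ and use the segment $\Tor_{a+1}^R(B,S_0)\to\Tor_{a+1}^R(C,S_0)\to\Tor_a^R(A,S_0)\to\Tor_a^R(B,S_0)$; since $b<a$ both $B$-terms vanish, so $\Tor_{a+1}^R(C,S_0)\cong\Tor_a^R(A,S_0)\neq 0$, hence $\pd_{\mathcal T}(C)>a$, and combined with the upper bound this gives $\pd_{\mathcal T}(C)=a+1$. This is exactly the paper's argument. (Also note the proposition assumes only a Tor-pair, not a hereditary one; your closing appeal to Proposition~\ref{p:Hereditary} is unnecessary, since Lemma~\ref{l:DimensionTor}(2) holds for any Tor-pair by dimension shifting.)
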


\begin{proof}
  Given a nonzero natural number $n$ and $S \in \mathcal S$ we have,
  by \cite[Corollary 6.30]{Rotman}, the exact sequence
  \begin{equation}
    \label{eq:Tor}
    \begin{tikzcd}[column sep=tiny]
      \Tor_{n+1}^R(B,S) \arrow{r} & \Tor_{n+1}^R(C,S) \arrow{r} &
      \Tor_n^R(A,S) \arrow{r} & \Tor_{n}^R(B,S) \arrow{r} &
      \Tor_n^R(C,S)
    \end{tikzcd}
  \end{equation}
  Set $n_A = \pd_{\mathcal T}(A)$ and $n_B = \pd_{\mathcal T}(B)$;
  take $S_A,S_B \in \mathcal S$ with $\Tor_{n_A}^R(A,S_A) \neq 0$ and
  $\Tor_{n_B}(B,S_B)\neq 0$.

  If $n_B < n_A$, then the sequence (\ref{eq:Tor}) for $n=n_A+1$ gives
  $\Tor_{n_A+2}(C,S)=0$ for each $S \in \mathcal S$. Morover, for
  $n=n_A$ and $S=S_A$, the sequence (\ref{eq:Tor}) gives
  $\Tor_{n_A+1}(C,S_A) \neq 0$. Consequently
  $\pd_{\mathcal T}(C) = n_A+1$.

  If $n_A < n_B$, then the sequence (\ref{eq:Tor}) for $n=n_B$ gives
  $\Tor_{n_B+1}(C,S)=0$ for all $S \in \mathcal S$. The same sequence
  for $n=n_B$ and $S=S_B$ gives that $\Tor_{n_B}^R(C,S_B) \neq 0$, so
  that $\pd_{\mathcal T}(C)=n_B$.

  Finally, if $n_A = n_B$, the sequence (\ref{eq:Tor}) for $n=n_A+1$
  gives that $\Tor_{n_A+2}^R(C,S)=0$ for each $S \in \mathcal S$, so
  that $\pd_{\mathcal T}(C) \leq n_A+1$.
\end{proof}

Mittag-Leffler modules were introduced by Raynaud and Gruson in their
seminal paper \cite{RaynaudGruson}. We shall work with the following
relativization of the concept, introduced in \cite{Rothmaler}.

\begin{defn}\label{d:RelativeML}
  Let $\mathcal X$ be a class of right $R$-modules and $M$ a left
  $R$-module. We say that $M$ is $\mathcal X$-Mittag-Leffler if for
  any family of modules in $\mathcal X$, $\{X_i:i \in I\}$, the
  canonical morphism from
  $\left(\prod_{i \in I}X_i\right) \otimes_R M$ to
  $\prod_{i \in I}(X_i \otimes_R M)$ is monic.
\end{defn}

We shall denote by $\ML(\mathcal X)$ the class consisting of all
$\mathcal X$-Mittag-Leffler left $R$-modules. We are interested in
Mittag-Leffler modules relative to a Tor-pair.



  

\section{Tor pairs closed under products}

As we have mentioned before, in \cite{Izurdiaga} they are
characterized rings for which direct products of flat modules have
finite flat dimension. Let $(\mathcal T,\mathcal S)$ be a hereditary
Tor-pair. In this section we study rings for which direct products of
modules in $\mathcal T$ have finite $\mathcal T$-projective
dimension. The main result relates this property with the
$\ML(\mathcal T)$-projective dimension of the class $\mathcal S$.

\begin{thm}\label{t:MainTheorem}
  The following assertions are equivalent for a hereditary Tor-pair
  $(\mathcal T, \mathcal S)$ and a natural number $n$.
  \begin{enumerate}
  \item Each product of modules in $\mathcal T$ has
    $\mathcal T$-projective dimension less than or equal to $n$.

  \item Each module in $\mathcal S$ has finite
    $\ML(\mathcal T)$-projective dimension less than or equal to
    $n+1$.
  \end{enumerate}
  Consequently:
  \begin{displaymath}
    \pd_{\mathcal T}\left(\Prod (\mathcal T)\right) = n \Leftrightarrow \pd_{\ML(\mathcal T)}(\mathcal S) = n+1
  \end{displaymath}
\end{thm}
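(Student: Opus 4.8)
The proof splits along two implications; I treat $n$ as fixed throughout. The key technical tool is Lemma \ref{l:DimensionTor}(2): for a Tor-pair $(\T,\S)$ and a right module $M$, $\pd_\T(M)\le n$ exactly when $\Tor_{n+1}^R(M,S)=0$ for all $S\in\S$, and dually a left-module statement (via the left-hand Tor-pair $(\S,\T)$, using that $\S$ is resolving by Proposition \ref{p:Hereditary}) says $\pd_{\T}$-type conditions on $S\in\S$ are controlled by $\Tor$ against $\T$. The bridge between the two statements is the canonical map $\mu_M\colon(\prod_i T_i)\otimes_R M\to\prod_i(T_i\otimes_R M)$ attached to a family $\{T_i\}\subseteq\T$ and a left module $M$: by Definition \ref{d:RelativeML}, injectivity of this map for \emph{all} families in $\T$ is precisely membership of $M$ in $\ML(\T)$. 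So the engine of the proof is to relate higher $\Tor$ of a product $\prod_i T_i$ with the failure of injectivity of such maps, and this is the step I expect to be the main obstacle.

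**From (1) to (2).** Assume every product of modules in $\T$ has $\T$-projective dimension $\le n$. Fix $S\in\S$; I must show $\pd_{\ML(\T)}(S)\le n+1$, i.e. the $n$th syzygy (in a left projective resolution of $S$) lies in $\ML(\T)$. Write a left projective resolution $\cdots\to P_1\to P_0\to S\to 0$ with $n$th syzygy $K$. For a family $\{T_i\}\subseteq\T$, dimension-shifting against the resolution identifies, up to the relevant $\Tor$ terms, the kernel of $(\prod T_i)\otimes K\to\prod(T_i\otimes K)$ with a comparison involving $\Tor_{n+1}^R(\prod T_i, S)$ versus $\prod\Tor_{n+1}^R(T_i,S)$; since $(\mathcal T,\mathcal S)$ is hereditary, $\Tor_{n+1}^R(T_i,S)=0$ for all $i$ by Proposition \ref{p:Hereditary}(3), and $\Tor_{n+1}^R(\prod T_i,S)=0$ by hypothesis (1) applied to $\prod T_i\in\Prod(\T)\subseteq\T_\omega$ together with Lemma \ref{l:DimensionTor}(2). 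Feeding these vanishings into the long exact sequences coming from the syzygy filtration forces $\mu_K$ to be injective. As the family was arbitrary, $K\in\ML(\T)$, so $\pd_{\ML(\T)}(S)\le n+1$.

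**From (2) to (1).** Conversely assume $\pd_{\ML(\T)}(S)\le n+1$ for every $S\in\S$. Let $\{T_i:i\in I\}\subseteq\T$ and put $T=\prod_i T_i$; I must show $\pd_\T(T)\le n$, equivalently $\Tor_{n+1}^R(T,S)=0$ for all $S\in\S$ by Lemma \ref{l:DimensionTor}(2). Fix $S\in\S$ and take a left projective resolution with $n$th syzygy $K\in\ML(\T)$. Dimension-shifting gives $\Tor_{n+1}^R(T,S)\cong\Tor_1^R(T,K)$, i.e. $\cong\Tor_1^R\!\big(\prod_i T_i,\,K\big)$. Now run the standard Tor-product comparison: choosing a short exact sequence $0\to K'\to F\to K\to 0$ with $F$ free, one gets $\Tor_1^R(\prod_i T_i,K)$ as the kernel of $(\prod_i T_i)\otimes K'\to(\prod_i T_i)\otimes F$, which one compares term-by-term with $\prod_i\Tor_1^R(T_i,K)$. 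Since each $T_i\in\T$ and $K\in\ML(\T)$, the relevant canonical maps $(\prod_i T_i)\otimes(-)\to\prod_i(T_i\otimes(-))$ are injective on the free module $F$ trivially and on $K'$ by Mittag-Lefflerness, and a short diagram chase (the same one used in the proof of Proposition 1.5, the $\S$-pure-submodule argument) shows $\Tor_1^R(\prod_i T_i,K)$ injects into $\prod_i\Tor_1^R(T_i,K)=0$ — the latter vanishing because $(\mathcal T,\mathcal S)$ is hereditary so $\Tor_1^R(T_i,K)\cong\Tor_{n+1}^R(T_i,S)=0$. Hence $\Tor_{n+1}^R(T,S)=0$, and $\pd_\T(T)\le n$.

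**The formula.** The displayed equivalence $\pd_\T(\Prod(\T))=n\Leftrightarrow\pd_{\ML(\T)}(\S)=n+1$ then follows formally: the equivalence of (1) and (2) for every $n$ says $\pd_\T(\Prod(\T))\le n\Leftrightarrow\pd_{\ML(\T)}(\S)\le n+1$, and subtracting the statement for $n-1$ (using that $\ML(\T)$ is closed under the operations needed for Definition \ref{d:RelativeDimension} to be resolution-independent, so the dimensions are well defined and monotone) upgrades "$\le$" to "$=$". The only genuine subtlety — and the step I would spend the most care on — is making the $\Tor$-versus-product comparison precise enough that the kernel of $\mu_K$ is literally controlled by the two $\Tor$ groups above, so that the hereditary vanishing $\Tor_{\ge 1}^R(\T,\S)=0$ can be leveraged; I would isolate this as a preliminary lemma relating $\Tor_*^R(\prod_i T_i,M)$, $\prod_i\Tor_*^R(T_i,M)$, and the maps $\mu$ through a spectral-sequence-free dimension-shift.
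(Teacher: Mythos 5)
Your proposal is correct and follows essentially the same route as the paper: both reduce, via Lemma \ref{l:DimensionTor}, to the vanishing of $\Tor_{n+1}^R\left(\prod_{i} T_i,S\right)$ and then compare the tensored syzygy sequence $0 \to K_n \to P_n \to K_{n-1} \to 0$ with its term-by-term product, using hereditarity (so that $K_{n-1}\in\mathcal S$ makes the product row exact) and the Mittag-Leffler property of the projective $P_n$ for the middle column. The only thing to tidy is an off-by-one in your second implication: the module required to lie in $\ML(\mathcal T)$ is the $n$-th syzygy $K_n$ (your $K'$), while $\Tor_{n+1}^R(T,S)\cong\Tor_1^R(T,K_{n-1})$, and this bookkeeping disappears once the indices are aligned.
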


\begin{proof}
  Fix $\{T_i:i \in I\}$ a family of modules in $\mathcal T$ and $S$ an
  object of $\mathcal S$. Take a projective resolution of $S$,
  \begin{displaymath}
    \begin{tikzcd}
      \cdots \arrow{r}{d_2} & P_1 \arrow{r}{d_1} & P_0 \arrow{r}{d_0}
      & S \arrow{r} & 0
    \end{tikzcd}
  \end{displaymath}
  and consider the short exact sequence
  \begin{displaymath}
    \begin{tikzcd}
      0 \arrow{r} & K_{n} \arrow{r} & P_{n} \arrow{r}{d_{n}} & K_{n-1}
      \arrow{r} & 0
    \end{tikzcd}
  \end{displaymath}
  where $K_{n-1} = \ker d_{n-1}$ and $K_{-1}=S$ if $n=0$. Tensoring by
  $\prod_{i \in I}T_i$ we can construct the following commutative
  diagram with exact rows:
  \begin{displaymath}
    \begin{tikzcd}[cramped,sep=small]
      & \left(\prod_{i \in I}T_i\right) \otimes_R K_{n} \arrow{r}{f} \arrow{d}{g} & \left(\prod_{i \in I}T_i\right) \otimes P_{n} \arrow{r} \arrow{d}{h} & \left(\prod_{i \in I} T_i \right) \otimes K_{n-1} \arrow{r} \arrow{d} & 0\\
      0 \arrow{r} & \prod_{i \in I}T_i \otimes K_{n} \arrow{r} &
      \prod_{i \in I}T_i \otimes P_{n} \arrow{r} & \prod_{i \in I}T_i
      \otimes K_{n-1} \arrow{r} & 0
    \end{tikzcd}
  \end{displaymath}
  (note that the last row is exact because $K_{n-1} \in \mathcal S$ by
  Proposition \ref{p:Hereditary}). Since $P_{n}$ is Mittag-Leffler,
  $h$ is monic and, consequently, $f$ is monic if and only if $g$ is
  monic. By \cite[Corollary 6.23 and Corollary 6.27]{Rotman} there
  exists a exact sequence
  \begin{displaymath}
    \begin{tikzcd}
      0 \arrow{r} & \Tor_{n+1}^R\left(\prod_{i \in I}T_i,S\right)
      \arrow{r} & \left(\prod_{i \in I}T_i\right) \otimes_R K_{n}
      \arrow{r}{f} & \left(\prod_{i \in I}T_i\right) \otimes P_{n}
    \end{tikzcd}
  \end{displaymath}
  so that $f$ is monic if and only if
  $\Tor_{n+1}^R\left(\prod_{i \in I}T_i,S\right)=0$. The conclusion is
  that for a fixed family $\{T_i:i \in I\}$ in $\mathcal T$ and module
  $S \in \mathcal S$, $g$ is monic if and only if
  $\Tor_{n+1}^R\left(\prod_{i \in I}T_i,S\right)=0$.

  Now using that both $\{T_i:i \in I\}$ and $S$ are arbitrary we get,
  by Lemma \ref{l:DimensionTor}, that all products of modules in
  $\mathcal T$ have $\mathcal T$-projective dimension less than or
  equal to $n$ if and only if each module in $\mathcal S$ has
  $\ML(\mathcal T)$-projective dimension less than or equal to $n+1$.
\end{proof}

As an inmediate consequence we get the characterization of when the left hand class of a
Tor-pair is closed under products.

\begin{cor}\label{c:TorPairClosedUnderProducts}
  The following assertions are equivalent for a hereditary Tor-pair
  $(\mathcal T, \mathcal S)$.
  \begin{enumerate}
  \item $\mathcal T$ is closed under products.

  \item Each module in $\mathcal S$ has $\ML(\mathcal T)$-projective
    dimension less than or equal to $1$.
  \end{enumerate}
\end{cor}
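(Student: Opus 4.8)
The plan is to read off this corollary as the special case $n = 0$ of Theorem \ref{t:MainTheorem}, so that essentially no new argument is needed. The only thing that has to be matched up is the phrase ``each product of modules in $\mathcal T$ has $\mathcal T$-projective dimension less than or equal to $0$'' with the phrase ``$\mathcal T$ is closed under products''.

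For that identification I would first recall, from Definition \ref{d:RelativeDimension}, the convention $\mathcal T_0 = \mathcal T$, so that a right $R$-module $M$ satisfies $\pd_{\mathcal T}(M) \leq 0$ exactly when $M \in \mathcal T$; alternatively this is Lemma \ref{l:DimensionTor}(2) with $n = 0$, which says $\pd_{\mathcal T}(M) \leq 0$ if and only if $\Tor_1^R(M,S) = 0$ for all $S \in \mathcal S$, i.e.\ $M \in {^\top}\mathcal{S} = \mathcal{T}$. Since $\mathcal T$ is a $\Tor$-orthogonal class it is closed under isomorphisms, hence $\Prod(\mathcal T) \subseteq \mathcal T$ holds if and only if every genuine product $\prod_{i \in I} T_i$ with all $T_i \in \mathcal T$ lies in $\mathcal T$; this is precisely condition (1) of the corollary.

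Then I would simply apply Theorem \ref{t:MainTheorem} with $n = 0$: its assertion (1) is now rephrased as ``$\mathcal T$ is closed under products'', and its assertion (2) is ``each module in $\mathcal S$ has $\ML(\mathcal T)$-projective dimension less than or equal to $1$'', so the stated equivalence follows at once. There is no real obstacle here: the corollary is a direct specialization of the main theorem, and the only point to be mildly careful with is the indexing convention $\mathcal X_0 = \mathcal X$ of Definition \ref{d:RelativeDimension}, together with the harmless passage between $\Prod(\mathcal T)$ and honest direct products.
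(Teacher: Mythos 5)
Your proposal is correct and matches the paper exactly: the corollary is stated there as an immediate consequence of Theorem \ref{t:MainTheorem} with $n=0$, and your careful identification of ``$\pd_{\mathcal T}(M)\leq 0$'' with ``$M\in\mathcal T$'' via the convention $\mathcal T_0=\mathcal T$ (or Lemma \ref{l:DimensionTor}(2)) is precisely the only point that needs checking.
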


If we apply this result to the Tor-pair induced by the flat modules,
we get the following well known results. Recall that the class of flat
Mittag-Leffler modules is closed under extensions since, if
\begin{displaymath}
  \begin{tikzcd}
    0 \arrow{r} & K \arrow{r} & M \arrow{r} & N \arrow{r} & 0
  \end{tikzcd}
\end{displaymath}
is a short exact sequence in $\RMod$ with $K$ and $N$ flat and Mittag-Leffler,
then $M$ is flat, the sequence is pure and, for each family of right
$R$-modules $\{X_i:i \in I\}$ there exists a commutative diagram
\begin{displaymath}
  \begin{tikzcd}
    & \left(\prod_{i \in I}X_i\right) \otimes_R K \arrow{r} \arrow{d}{f} & \left(\prod_{i \in I}X_i\right) \otimes M \arrow{r} \arrow{d}{g} & \left(\prod_{i \in I} X_i \right) \otimes N \arrow{r} \arrow{d}{h} & 0\\
    0 \arrow{r} & \prod_{i \in I}X_i \otimes K \arrow{r} & \prod_{i
      \in I}X_i \otimes M \arrow{r} & \prod_{i \in I}X_i \otimes N
    \arrow{r} & 0
  \end{tikzcd}
\end{displaymath}
from which follows that $g$ is monic, as $f$ and $h$ are.

\begin{cor}
  \begin{enumerate}
  \item Pure submodules of flat Mittag-Leffler right modules are
    Mittag-Leffler.

  \item $R$ is right coherent if and only if each submodule of a
    projective right module is Mittag-Leffler with respect to the flat
    modules.
  \end{enumerate}
\end{cor}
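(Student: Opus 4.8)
The plan is to deduce assertion (2) from Corollary \ref{c:TorPairClosedUnderProducts} applied to the flat modules together with Chase's classical theorem \cite{Chase}, and to obtain assertion (1) by a short diagram chase.

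For (1), I would start from a pure submodule $K$ of a flat Mittag--Leffler module $M$, observe that $M/K$ is then flat (a pure quotient of a flat module is flat), and fix an arbitrary family $\{N_\lambda\}$ of modules on the opposite side. Tensoring the pure exact sequence $0\to K\to M\to M/K\to 0$ with $\prod_\lambda N_\lambda$ remains left exact because $\Tor_1^R(M/K,-)=0$, while tensoring it with each $N_\lambda$ separately and then taking the product remains exact by purity (products are exact). The canonical maps of Definition \ref{d:RelativeML} turn this into a commutative ladder with exact rows whose middle vertical arrow is monic --- this is where ``$M$ Mittag--Leffler'' enters --- and a one-line chase of the left-hand square then shows that the vertical arrow over $K$ is monic, i.e.\ that $K$ is Mittag--Leffler. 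The only point requiring care is that the top row is genuinely short exact, which is exactly the flatness of $M/K$.

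For (2), I would apply Corollary \ref{c:TorPairClosedUnderProducts} to the hereditary Tor-pair $(\Flat,\RMod)$ --- or rather the corresponding Tor-pair over $R\op$, so that the sides agree with the statement. Chase's theorem identifies ``$\Flat$ closed under products'' with ``$R$ right coherent''. On the other side of the equivalence in Corollary \ref{c:TorPairClosedUnderProducts}, the condition ``$\pd_{\ML(\Flat)}(M)\le 1$ for every module $M$'' unwinds to ``every module admits a projective presentation with kernel in $\ML(\Flat)$''. To convert this into the stated form I would check that $\ML(\Flat)$ contains all projective modules and is closed under finite direct sums and direct summands (the canonical maps of Definition \ref{d:RelativeML} split over finite direct sums), so that by the remark after Definition \ref{d:RelativeDimension} the relative projective dimension is independent of the chosen presentation; then ``some presentation has kernel in $\ML(\Flat)$'' upgrades to ``the kernel of \emph{every} epimorphism from a projective module lies in $\ML(\Flat)$'', which is exactly ``every submodule of a projective module is Mittag--Leffler with respect to the flat modules''.

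I expect the only real friction to be the left/right bookkeeping: making sure the Tor-pair is taken on the side for which Chase's theorem yields ``right coherent'' and for which $\ML(\Flat)$ consists of modules on the side named in the statement, together with recording the (routine) summand- and finite-sum closure of $\ML(\Flat)$ that makes the relative projective dimension presentation-independent. Beyond that, everything is a formal consequence of results already established, consistent with the paper's description of these as ``well known''.
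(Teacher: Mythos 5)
Your argument is correct. For part (2) you follow essentially the paper's route: apply the left--right mirror of Corollary \ref{c:TorPairClosedUnderProducts} to the Tor-pair whose left-hand class is the flat modules on the appropriate side, use Chase's theorem to translate ``closed under products'' into ``right coherent'', and unwind the condition that every module in the right-hand class has $\ML(\Flat)$-projective dimension at most $1$; your explicit verification that $\ML(\Flat)$ contains the projectives and is closed under finite direct sums and direct summands --- so that the relative dimension is presentation-independent and ``some syzygy lies in $\ML(\Flat)$'' genuinely upgrades to ``every submodule of a projective lies in $\ML(\Flat)$'' --- is a point the paper passes over silently and is worth recording. For part (1), however, your route is genuinely different. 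The paper first applies Corollary \ref{c:TorPairClosedUnderProducts} to $(\Flat_R,\RMod)$ to settle the case of pure submodules of \emph{projective} modules, and then reduces the general case to that one by pulling back a projective presentation of $M/K$, invoking closure of flat Mittag-Leffler modules under extensions, and noting that the resulting row onto the projective splits, exhibiting $K$ as a direct summand of a flat Mittag-Leffler module. You instead run the classical direct diagram chase: tensor the pure exact sequence with $\prod_\lambda N_\lambda$, compare with the product of the individually tensored sequences, and deduce injectivity of the vertical map over $K$ from that of the map over $M$ together with left exactness of the top row. Your chase is shorter, avoids the extension-closure lemma entirely, and in fact proves the stronger statement that a pure submodule of \emph{any} Mittag-Leffler module is Mittag-Leffler: flatness of $M$ enters only through the flatness of $M/K$, and even that is dispensable, since purity of $0\to K\to M\to M/K\to 0$ already makes $K\otimes_R\prod_\lambda N_\lambda\to M\otimes_R\prod_\lambda N_\lambda$ monic. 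The paper's longer detour mainly serves to illustrate the Tor-pair machinery just developed; both proofs are complete.
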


\begin{proof}
  (1) If we apply the previous result to the Tor-pair
  $(\Flat_R,\RMod)$ we get that each flat right module has
  $\ML(\RMod)$-projective dimension less than or equal to $1$, as
  $\RMod$ is closed under products. Noting that $\Flat_R$ consists of
  all pure quotients of projective modules and that $\ML(\RMod)$ is
  the class of all Mittag-Leffler modules, this is equivalent to all
  pure submodules of projective right modules being (flat)
  Mittag-Leffler modules.

  Now let $M$ be a flat Mittag-Leffler right module and $K$ a pure submodule
  of $M$. Let $f:P \rightarrow \frac{M}{K}$ be an epimorphism with $P$
  projective. Making pullback of $f$ along the projection
  $M \rightarrow \frac{M}{K}$ we get the following commutative diagram
  with exact rows and colummns:
  \begin{displaymath}
    \begin{tikzcd}
      & & 0 \arrow{d} & 0 \arrow{d} & \\
      & & \Ker f \arrow{d} \arrow[equal]{r} & \Ker f \arrow{d} & \\
      0 \arrow{r} & K \arrow{r} \arrow[equal]{d} & Q \arrow{r} \arrow{d} & P \arrow{r}\arrow{d} & 0\\
      0 \arrow{r} & K \arrow{r} & M \arrow{r} \arrow{d} & \frac{M}{K}
      \arrow{r} \arrow{d}
      & 0\\
      & & 0 & 0 &
    \end{tikzcd}
  \end{displaymath}
  Since the first column is pure and $P$ is projective, $\Ker f$ is
  flat Mittag-Leffler by the previous proof. Then, as the class of flat Mittag-Leffler
  modules is closed under extensions, $Q$ is flat Mittag-Leffler as well. But the
  middle row is split, so that $K$ is isomorphic to a direct summand
  of $Q$. Thus, $K$ is flat Mittag-Leffler.
  
  (2) If we consider the Tor-pair $(\ModR,{_R}\Flat)$, we get that $R$
  is right coherent if and only if ${_R}\Flat$ is closed under
  products if and only if (by the left version of Corollary \ref{c:TorPairClosedUnderProducts})
  each right module has projective dimension relative to the
  Mittag-Leffler modules less than or equal to $1$. But this is
  equivalent to each submodule of a projective module being
  Mittag-Leffler with respect to the flat modules.
\end{proof}

Now, what about the class $\mathcal T_m$ where $m$ is a nonzero
natural number? When is it closed under products? The following
result, which extends \cite[Proposition 4.1]{Izurdiaga}, gives the
answer.

\begin{prop}\label{p:FiniteDimensionTorPairs}
  The following assertions are equivalent for a Tor pair
  $(\mathcal T, \mathcal S)$.
  \begin{enumerate}
  \item Each module in $\Prod (\mathcal T)$ has finite
    $\mathcal T$-projective dimension.

  \item $\pd_{\mathcal T}\left(\Prod (\mathcal T)\right)$ is finite.

  \item There exists a natural number $m$ such that each module in
    $\Prod (\mathcal T_m)$ has finite $\mathcal T$-projective
    dimension.

  \item There exists a natural number $m$ such that
    $\pd_{\mathcal T}\left(\Prod (\mathcal T_m)\right)$ is finite.

  \item For any natural number $m$ each module in
    $\Prod (\mathcal T_m)$ has finite $\mathcal T$-projective
    dimension.

  \item For any natural number $m$,
    $\pd_{\mathcal T}\left(\Prod (\mathcal T_m)\right)$ is finite.
  \end{enumerate}
  Moreover, when all these conditions are satisfied then
  \[\pd_{\mathcal T}(\Prod(\mathcal T_{m})) \leq \pd_{\mathcal T}\left(\Prod
      (\mathcal T_{m+1})\right) \leq \pd_{\mathcal T}\left(\Prod
      (\mathcal T)\right)+m+1\] for each natural number $m$. If,
  in addition $\pd_{\mathcal T}(\Prod \mathcal T) = 0$ (that is,
  $\mathcal T$ is closed under products), then $\mathcal T_m$ is
  closed under products for each natural number $m$.
\end{prop}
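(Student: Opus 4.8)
The plan is to isolate a single substantive implication and derive everything else formally. The equivalences $(1)\Leftrightarrow(2)$, $(3)\Leftrightarrow(4)$ and $(5)\Leftrightarrow(6)$ are instances of Lemma~\ref{l:FinitisticDimensions}: take $\mathcal X=\mathcal T$ (which is closed under direct summands, closed under finite direct sums and contains all projectives, simply because $(\mathcal T,\mathcal S)$ is a Tor-pair) and take $\mathcal Y$ to be $\Prod(\mathcal T)$, respectively $\Prod(\mathcal T_m)$, each of which is closed under countable direct products. The implications $(5)\Rightarrow(3)$ and $(4)\Rightarrow(3)$ are trivial, and $(3)\Rightarrow(1)$ holds because $\mathcal T\subseteq\mathcal T_m$ gives $\Prod(\mathcal T)\subseteq\Prod(\mathcal T_m)$. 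Hence the whole equivalence follows once we prove $(2)\Rightarrow(6)$, and the cleanest route is to prove directly the quantitative statement that contains it: if $d:=\pd_{\mathcal T}(\Prod(\mathcal T))<\infty$ then
\[
\pd_{\mathcal T}\bigl(\Prod(\mathcal T_k)\bigr)\leq d+k \quad\text{for every }k\in\mathbb{N}.
\]

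I would prove this by induction on $k$, the case $k=0$ being the definition of $d$. For the inductive step with $k\geq 1$, fix a family $\{X_i:i\in I\}$ with each $X_i\in\mathcal T_k$ and, for each $i$, a short exact sequence $0\to K_i\to P_i\to X_i\to 0$ with $P_i$ projective. Then $K_i$ is a first syzygy of $X_i$, so $\pd_{\mathcal T}(K_i)\leq k-1$; here one uses that $\pd_{\mathcal T}$ can be read off any projective resolution, which is legitimate since $\mathcal T$ is closed under summands and finite sums (the remark after Definition~\ref{d:RelativeDimension}). Direct products are exact, so we get a short exact sequence
\[
0\longrightarrow \prod_{i\in I} K_i\longrightarrow \prod_{i\in I} P_i\longrightarrow \prod_{i\in I} X_i\longrightarrow 0 .
\]
Now $\prod_{i\in I}K_i\in\Prod(\mathcal T_{k-1})$, so $\pd_{\mathcal T}(\prod_{i\in I}K_i)\leq d+k-1$ by the induction hypothesis, while $\prod_{i\in I}P_i\in\Prod(\mathcal T)$ (projectives lie in $\mathcal T$), so $\pd_{\mathcal T}(\prod_{i\in I}P_i)\leq d$. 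Proposition~\ref{p:DimensionInExactSequences}, in all three of its cases, yields the uniform bound $\pd_{\mathcal T}(C)\leq\max\{\pd_{\mathcal T}(A)+1,\pd_{\mathcal T}(B)\}$ for a short exact sequence $0\to A\to B\to C\to0$; applied here this gives $\pd_{\mathcal T}(\prod_{i\in I}X_i)\leq\max\{d+k,d\}=d+k$. Since the family was arbitrary, $\pd_{\mathcal T}(\Prod(\mathcal T_k))\leq d+k$, closing the induction.

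It remains to record the ``moreover'' clause. The middle inequality $\pd_{\mathcal T}(\Prod(\mathcal T_m))\leq\pd_{\mathcal T}(\Prod(\mathcal T_{m+1}))$ is immediate from $\mathcal T_m\subseteq\mathcal T_{m+1}$ and monotonicity of $\Prod$, and the right-hand inequality $\pd_{\mathcal T}(\Prod(\mathcal T_{m+1}))\leq d+m+1$ is the case $k=m+1$ above. Finally, if $\pd_{\mathcal T}(\Prod(\mathcal T))=0$, the estimate specializes to $\pd_{\mathcal T}(\Prod(\mathcal T_m))\leq m$, i.e.\ $\Prod(\mathcal T_m)\subseteq\mathcal T_m$; combined with $\mathcal T_m\subseteq\Prod(\mathcal T_m)$ this says $\mathcal T_m$ is closed under products for every $m$. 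I expect the only points needing genuine care are the syzygy bookkeeping in the inductive step (that a first syzygy of a module of $\mathcal T$-projective dimension $\leq k$ has $\mathcal T$-projective dimension $\leq k-1$, independently of the chosen presentation) and checking that Proposition~\ref{p:DimensionInExactSequences} does collapse to the single $\max$-estimate used above; everything else is formal manipulation.
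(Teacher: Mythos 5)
Your proposal is correct and follows essentially the same route as the paper: the three equivalences via Lemma~\ref{l:FinitisticDimensions}, the trivial implications from $\mathcal T\subseteq\mathcal T_m$, and dimension shifting through the product of projective presentations combined with Proposition~\ref{p:DimensionInExactSequences}. The only (harmless) difference is organizational: you fold the paper's separate qualitative induction for $(1)\Rightarrow(5)$ and quantitative induction for the inequality into a single quantitative induction $\pd_{\mathcal T}(\Prod(\mathcal T_k))\leq d+k$, and your observation that all three cases of Proposition~\ref{p:DimensionInExactSequences} collapse to $\pd_{\mathcal T}(C)\leq\max\{\pd_{\mathcal T}(A)+1,\pd_{\mathcal T}(B)\}$ is accurate.
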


\begin{proof}
  (1) $\Leftrightarrow$ (2), (3) $\Leftrightarrow$ (4) and (5)
  $\Leftrightarrow$ (6) follow from Lemma
  \ref{l:FinitisticDimensions}.

  (1) $\Leftrightarrow$ (4) and (5) $\Rightarrow$ (1) are trivial.

  (1) $\Rightarrow$ (5) is proved by dimension shifting noting that,
  if the result is true for some natural number $m$ and
  $\{T_i:i \in I\}$ is a family of modules having
  $\mathcal T$-projective dimension less than or equal to $m+1$ then,
  for each $i \in I$ there exists a short exact sequence
  \begin{displaymath}
    \begin{tikzcd}
      0 \arrow{r} & K_i \arrow{r} & P_i \arrow{r} & T_i \arrow{r} & 0
    \end{tikzcd}
  \end{displaymath}
  with $P_i$ projective and $K_i \in \mathcal T_m$. Since the direct
  product is an exact functor, these sequences give the exact sequence
  \begin{equation}
    \label{eq:Product}
    \begin{tikzcd}
      0 \arrow{r} & \prod_{i \in I}K_i \arrow{r} & \prod_{i \in I}P_i
      \arrow{r} & \prod_{i \in I}T_i \arrow{r} & 0
    \end{tikzcd}
  \end{equation}
  in which both the first and second term have finite
  $\mathcal T$-projective dimension by the induction hyphotesis. Then
  so has $\prod_{i \in I}T_i$ by Proposition
  \ref{p:DimensionInExactSequences}.

  In order to prove the last inequality we shall proceed by induction
  on $m$. Suppose that we have proved the result for some natural
  number $m$. The first inequality is trivial, since
  $\mathcal T_m \subseteq \mathcal T_{m+1}$. In order to prove the
  other one simply note that for any family of modules
  $\{T_i:i \in I\}$ in $\mathcal T_{m+1}$ we can construct, as above, a
  short exact sequence
  \begin{displaymath}
    \begin{tikzcd}
      0 \arrow{r} & K \arrow{r} & P \arrow{r} & \prod_{i \in I} T_i
      \arrow{r} & 0
    \end{tikzcd}
  \end{displaymath}
  with $P \in \Prod(\mathcal T)$ and $K \in \Prod(\mathcal
  T_m)$. Using Proposition \ref{p:DimensionInExactSequences} and the
  induction hyphotesis we get the desired inequality.

  Finally, if $\pd_{\mathcal T}(\Prod (\mathcal T))=0$ we induct on
  $m$. If $\mathcal T_{m} = \mathcal T_{m-1}$, the result follows from
  the induction hyphotesis. If $\mathcal T_m \neq \mathcal T_{m-1}$,
  then the preceeding inequality gives
  $\pd_{\mathcal T}(\Prod(\mathcal T_m)) \leq m$. In addition
  $m \leq \pd_{\mathcal T}(\Prod(\mathcal T_m))$ as well, so that
  $\mathcal T_m$ is closed under products.
\end{proof}

As an application of this result we can characterize when the class $\mathcal T_\omega$ is
closed under products:

\begin{cor}
  The following assertions are equivalent for a hereditary Tor-pair
  $(\mathcal T, \mathcal F)$.
  \begin{enumerate}
  \item $\mathcal T_{\omega}$ is closed under direct products.

  \item $\pd_{\mathcal T}(\mathcal T_{\omega})$ and $\pd_{\ML(\mathcal
      T)}(\mathcal S)$ are finite. That is, the right finitistic
    $\mathcal T$-projective dimension is finite and each module in
    $\mathcal S$ has finite $\ML(\mathcal T)$-projective dimension.
  \end{enumerate}
\end{cor}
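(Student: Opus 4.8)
The plan is to reduce everything to three results already available: Theorem~\ref{t:MainTheorem}, Proposition~\ref{p:FiniteDimensionTorPairs} and Lemma~\ref{l:FinitisticDimensions}. The one extra ingredient I would isolate first is the elementary remark that if $d:=\pd_{\mathcal T}(\mathcal T_\omega)$ is finite, then $\mathcal T_\omega=\mathcal T_d$: indeed $\mathcal T_d\subseteq\mathcal T_\omega$ always, and conversely any $T\in\mathcal T_\omega$ has $\pd_{\mathcal T}(T)\le d$ by definition of $\pd_{\mathcal T}(\mathcal T_\omega)$, so $T\in\mathcal T_d$. I would also note once and for all that $\mathcal T$, being the left-hand class of a Tor-pair, is closed under direct summands and finite direct sums and contains all projectives, so Lemma~\ref{l:FinitisticDimensions} is applicable with $\mathcal X=\mathcal T$.

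For $(1)\Rightarrow(2)$ I would argue as follows. Since $\mathcal T_\omega$ is closed under products it is in particular closed under countable direct products, and every module in $\mathcal T_\omega$ has finite $\mathcal T$-projective dimension by definition; hence Lemma~\ref{l:FinitisticDimensions} (with $\mathcal Y=\mathcal T_\omega$) gives that $\pd_{\mathcal T}(\mathcal T_\omega)$ is finite. Next, $\mathcal T=\mathcal T_0\subseteq\mathcal T_\omega$, so $\Prod(\mathcal T)\subseteq\Prod(\mathcal T_\omega)=\mathcal T_\omega$ and every product of modules in $\mathcal T$ has finite $\mathcal T$-projective dimension; by Proposition~\ref{p:FiniteDimensionTorPairs} this means $\pd_{\mathcal T}(\Prod(\mathcal T))$ is finite, say equal to $n$, and then Theorem~\ref{t:MainTheorem} gives $\pd_{\ML(\mathcal T)}(\mathcal S)\le n+1<\infty$. (Hereditariness of the Tor-pair enters only through this last invocation of Theorem~\ref{t:MainTheorem}.)

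For $(2)\Rightarrow(1)$ I would start from the finiteness of $\pd_{\ML(\mathcal T)}(\mathcal S)$: applying Theorem~\ref{t:MainTheorem} with $n=\max(\pd_{\ML(\mathcal T)}(\mathcal S)-1,\,0)$ shows that every product of modules in $\mathcal T$ has $\mathcal T$-projective dimension at most $n$, in particular $\pd_{\mathcal T}(\Prod(\mathcal T))$ is finite, so Proposition~\ref{p:FiniteDimensionTorPairs} yields that $\pd_{\mathcal T}(\Prod(\mathcal T_m))$ is finite for every natural number $m$. Now I would invoke the remark from the first paragraph: finiteness of $\pd_{\mathcal T}(\mathcal T_\omega)=d$ gives $\mathcal T_\omega=\mathcal T_d$, whence $\Prod(\mathcal T_\omega)=\Prod(\mathcal T_d)$ has finite $\mathcal T$-projective dimension; thus every product of modules in $\mathcal T_\omega$ lies again in $\mathcal T_\omega$, i.e.\ $\mathcal T_\omega$ is closed under products.

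The step I expect to be the real point, rather than a genuine obstacle, is the collapse $\mathcal T_\omega=\mathcal T_d$: without it the bounds from Proposition~\ref{p:FiniteDimensionTorPairs}, which are of the form $\pd_{\mathcal T}(\Prod(\mathcal T_m))\le\pd_{\mathcal T}(\Prod\mathcal T)+m+1$ and grow with $m$, would not control a product of modules of unbounded $\mathcal T$-projective dimension. Everything else is bookkeeping; the only minor case to keep in mind is $\pd_{\ML(\mathcal T)}(\mathcal S)=0$ in $(2)\Rightarrow(1)$, where one simply uses $n=0$ in Theorem~\ref{t:MainTheorem}.
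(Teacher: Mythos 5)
Your proof is correct and follows essentially the same route as the paper: Lemma~\ref{l:FinitisticDimensions} to obtain finiteness of $\pd_{\mathcal T}(\mathcal T_\omega)$ and the collapse $\mathcal T_\omega=\mathcal T_d$, Theorem~\ref{t:MainTheorem} to pass between $\pd_{\mathcal T}(\Prod(\mathcal T))$ and $\pd_{\ML(\mathcal T)}(\mathcal S)$, and Proposition~\ref{p:FiniteDimensionTorPairs} to extend finiteness to products of modules in $\mathcal T_d$. If anything, your explicit appeal to Theorem~\ref{t:MainTheorem} in $(2)\Rightarrow(1)$ is slightly more precise than the paper's citation of Corollary~\ref{c:TorPairClosedUnderProducts}, which only covers the case $\pd_{\ML(\mathcal T)}(\mathcal S)\le 1$.
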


\begin{proof}
  (1) $\Rightarrow$ (2). If $\mathcal T_\omega$ is closed under direct
  products, we can apply Lemma \ref{l:FinitisticDimensions} to get
  that $\pd_{\mathcal T}(\mathcal T_{\omega})$ is finite. That is,
  $\mathcal T_\omega = \mathcal T_n$ for some natural number $n$. Now
  $\pd_{\ML(\mathcal T)}(\mathcal S)$ is finite as a consequence of
  Theorem \ref{t:MainTheorem} and Proposition \ref{p:FiniteDimensionTorPairs}.

  (2) $\Rightarrow$ (1). Since $\pd_{\mathcal T}(\mathcal T_\omega)$
  is finite, there exists a natural number $n$ such that
  $\mathcal T_\omega = \mathcal T_n$. Now, as $\pd_{\ML(\mathcal
    T)}(\mathcal S)$, apply Corollary
  \ref{c:TorPairClosedUnderProducts} to get that each product of modules in
  $\mathcal T$ has finite $\mathcal T$-projective dimension. By
  Proposition \ref{p:FiniteDimensionTorPairs}, $\mathcal T_n$ is
  closed under products as well.
\end{proof}

Recall that a class $\mathcal X$ of right $R$-modules is definable if
it is closed under direct products, direct limits and pure
submodules. As a consequence of the results of this section we can
characterize when, fixed a Tor-pair $(\mathcal T,\mathcal S)$, the
classes $\mathcal T_m$ are definable for each natural number $m$. The
same proof of \cite[Proposition 4.7]{Izurdiaga} gives:

\begin{prop}\label{p:closure}
  Let $(\mathcal T, \mathcal S)$ be a hereditary Tor-pair and $n$ a
  natural number. Then $\mathcal T_n$ is closed under direct limits
  and pure submodules.
\end{prop}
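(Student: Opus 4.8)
The plan is to establish the two closure properties separately, exploiting the homological characterization of $\pd_{\mathcal T}$ from Lemma~\ref{l:DimensionTor}(2): a right module $M$ lies in $\mathcal T_n$ precisely when $\Tor^R_{n+1}(M,S)=0$ for every $S\in\mathcal S$. Since $(\mathcal T,\mathcal S)$ is hereditary, Proposition~\ref{p:Hereditary}(3) also gives that $\Tor^R_k(T,S)=0$ for all $T\in\mathcal T$, $S\in\mathcal S$ and all $k\geq 1$, which lets us move freely between $\Tor_{n+1}$ of $M$ and $\Tor_1$ of an $n$-th syzygy. Both parts should follow the corresponding arguments in \cite[Proposition~4.7]{Izurdiaga}, as the excerpt already anticipates; the point is simply that those arguments used only formal properties of Tor-pairs and direct limits, not anything specific to the flat case.

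For closure under direct limits, I would take a direct system $\{M_\lambda\}_{\lambda\in\Lambda}$ in $\mathcal T_n$ with colimit $M=\varinjlim M_\lambda$. Fix $S\in\mathcal S$. Because $\Tor$ commutes with direct limits in its first variable, $\Tor^R_{n+1}(M,S)=\varinjlim_\lambda \Tor^R_{n+1}(M_\lambda,S)$. Each term of the latter system vanishes by Lemma~\ref{l:DimensionTor}(2) applied to $M_\lambda\in\mathcal T_n$, so the colimit is $0$; as $S\in\mathcal S$ was arbitrary, Lemma~\ref{l:DimensionTor}(2) gives $M\in\mathcal T_n$. This direction is routine and has no real obstacle.

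For closure under pure submodules, let $K$ be a pure submodule of $T\in\mathcal T_n$, with quotient $Q=T/K$, so $0\to K\to T\to Q\to 0$ is pure exact. The strategy is dimension shifting against the syzygies of an arbitrary $S\in\mathcal S$. Choose a projective resolution of $S$ and let $S'$ be the $n$-th syzygy, so that $S'\in\mathcal S$ again by Proposition~\ref{p:Hereditary}(2) (since $\mathcal S$ is resolving, hence closed under syzygies of its members up to the relevant index), and $\Tor^R_{n+1}(N,S)\cong\Tor^R_1(N,S')$ for every $N$ by dimension shifting \cite[Corollary~6.23]{Rotman}. Hence it suffices to show $\Tor^R_1(K,S')=0$. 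Taking a projective presentation $0\to S''\to P\to S'\to 0$ with $S''\in\mathcal S$ (again by Proposition~\ref{p:Hereditary}), I would form the commutative diagram with exact rows
\begin{displaymath}
  \begin{tikzcd}[cramped,sep=small]
    0 \arrow{r} & \Tor_1^R(K,S') \arrow{r}\arrow{d}{\alpha} & K\otimes_R S'' \arrow{r}\arrow{d}{\beta} & K\otimes_R P \arrow{d}{\gamma}\\
    0 \arrow{r} & \Tor_1^R(T,S') \arrow{r} & T\otimes_R S'' \arrow{r} & T\otimes_R P
  \end{tikzcd}
\end{displaymath}
exactly as in the proof that $\mathcal T$ is closed under $\mathcal S$-pure submodules earlier in the excerpt. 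Here $\beta$ is monic because the inclusion $K\hookrightarrow T$ is pure (hence $\mathcal S$-pure) and $S''\in\mathcal S$; therefore $\alpha$ is monic; and $\Tor^R_1(T,S')=0$ since $T\in\mathcal T_n$ forces $\Tor^R_{n+1}(T,S)=0$, i.e. $\Tor^R_1(T,S')=0$. Thus $\Tor^R_1(K,S')=0$, so $\Tor^R_{n+1}(K,S)=0$ for all $S\in\mathcal S$, and $K\in\mathcal T_n$ by Lemma~\ref{l:DimensionTor}(2).

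The only point requiring a little care—the main obstacle, such as it is—is the bookkeeping in the dimension-shifting step: one must make sure that the syzygy $S'$ one shifts to genuinely lies in $\mathcal S$ (so that the purity hypothesis can be fed into $-\otimes_R S'$), which is where hereditariness of the Tor-pair is essential, and one must track indices so that $\Tor_{n+1}$ against $S$ really becomes $\Tor_1$ against $S'$. Once that is set up, both closure properties reduce to the two diagram arguments already present in the excerpt, so no new ideas are needed beyond quoting Lemma~\ref{l:DimensionTor} and Proposition~\ref{p:Hereditary}.
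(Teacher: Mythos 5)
Your proof is correct and follows essentially the same route as the paper: both parts reduce, via Lemma~\ref{l:DimensionTor}(2), to showing $\Tor^R_{n+1}(-,S)$ vanishes, using commutation of $\Tor$ with direct limits for the first claim and the injectivity of $\Tor^R_{n+1}(K,S)\to\Tor^R_{n+1}(T,S)$ for a pure submodule $K\leq T$ for the second. The only difference is cosmetic: the paper obtains that injectivity by citing the short exact sequences of $\Tor$ groups induced by a pure exact sequence (as in \cite[Proposition 4.7]{Izurdiaga}), whereas you derive it by dimension shifting to a syzygy of $S$ and rerunning the diagram chase already used for $\mathcal S$-pure submodules of modules in $\mathcal T$; your indexing caveat is the right one to flag, and the argument goes through.
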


\begin{proof}
  The closure under direct limits follows from Lemma
  \ref{l:DimensionTor} and the fact that the $\Tor_{n+1}^R$ functor
  commutes with direct colimits. In order to see that $\mathcal T_n$
  is closed under pure submodules take $T \in \mathcal T_n$ and $T'$ a
  pure submodule of $T$. Arguing as in \cite[Proposition
  4.7]{Izurdiaga} we get, for each $S \in \mathcal S$, the exact
  sequence
  \begin{displaymath}
    \begin{tikzcd}
      0 \arrow{r} & \Tor_{n+1}^R(T',S) \arrow{r} & \Tor_{n+1}^R(T,S)
      \arrow{r} & \Tor_{n+1}^R\left(\frac{T}{T'},S\right) \arrow{r} &
      0
    \end{tikzcd}
  \end{displaymath}
  Now, if $T \in \mathcal T_n$ then so does $T'$ by Lemma
  \ref{l:DimensionTor}.
\end{proof}

Putting all things together, we charactize when $\mathcal T_m$ is a
definable class for each $m \in \mathbb N$. This result extends
\cite[Proposition 9.12]{AngeleriHerbera}

\begin{cor}\label{c:Definable}
  The following assertions are equivalent for a
  hereditary Tor-pair $(\mathcal T, \mathcal S)$.
  \begin{enumerate}
  \item Each module in $\mathcal S$ has $\ML(\mathcal T)$-projective
    dimension less than or equal to $1$.
    
  \item $\mathcal T$ is closed under products.

  \item $\mathcal T_m$ is a definable category for each natural number
    $m$.
  \end{enumerate}
\end{cor}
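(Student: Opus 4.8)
The plan is to assemble the statement from the three results already established in this section, so that no genuinely new argument is needed. The equivalence (1) $\Leftrightarrow$ (2) is exactly Corollary \ref{c:TorPairClosedUnderProducts} applied to the hereditary Tor-pair $(\mathcal T,\mathcal S)$, so I would simply invoke it. The whole content of the corollary therefore reduces to proving that (2) is equivalent to (3).

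For the implication (2) $\Rightarrow$ (3), I would argue as follows. Assume $\mathcal T$ is closed under products, which is precisely the condition $\pd_{\mathcal T}(\Prod \mathcal T) = 0$ appearing in the last part of Proposition \ref{p:FiniteDimensionTorPairs}; that proposition then yields that $\mathcal T_m$ is closed under products for every natural number $m$. Independently, Proposition \ref{p:closure} (using that the Tor-pair is hereditary, which is part of the standing hypothesis) gives that each $\mathcal T_m$ is closed under direct limits and under pure submodules. Putting these together, $\mathcal T_m$ is closed under direct products, direct limits and pure submodules, hence is definable in the sense recalled just before the statement.

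The converse (3) $\Rightarrow$ (2) is immediate: taking $m = 0$ and recalling that $\mathcal T_0 = \mathcal T$ by Definition \ref{d:RelativeDimension}, the assumption that $\mathcal T_0$ is a definable class entails in particular that $\mathcal T$ is closed under direct products, which is (2).

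I do not expect any real obstacle. The only points deserving a word of care are that Proposition \ref{p:FiniteDimensionTorPairs} does apply here — it does, since $(\mathcal T,\mathcal S)$ is a Tor-pair and (2) is exactly the hypothesis $\pd_{\mathcal T}(\Prod \mathcal T)=0$ of its final assertion — and that Proposition \ref{p:closure} needs heredity, which is assumed throughout the corollary. So the proof is essentially a two-line bookkeeping of earlier results.
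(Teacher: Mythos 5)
Your proof is correct and follows exactly the route the paper takes: (1) $\Leftrightarrow$ (2) by Corollary \ref{c:TorPairClosedUnderProducts}, and (2) $\Leftrightarrow$ (3) by combining the final assertion of Proposition \ref{p:FiniteDimensionTorPairs} with Proposition \ref{p:closure}, the converse being the trivial case $m=0$. You in fact spell out the bookkeeping slightly more explicitly than the paper's two-line proof does.
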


\begin{proof}
  (1) $\Leftrightarrow$ (2) is Corollary
  \ref{c:TorPairClosedUnderProducts}. (2) $\Leftrightarrow$ (3)
  follows from propositions \ref{p:FiniteDimensionTorPairs} and \ref{p:closure}.
\end{proof}

\section{Approximations by modules in $\mathcal T_m$}
\label{sec:appr-modul-mathc-1}

In this section we study the existence of approximations by modules in
$\mathcal T_m$ for each natural number $m$. Let $\mathcal X$ be a
class of right $R$-modules and $M$ a module. A $\mathcal X$-precover
of $M$ is a morphism $f:X \rightarrow M$ with $X \in \mathcal X$ such
that for each $X' \in \mathcal X$, the induced morphism
$\Hom_R(X',X) \rightarrow \Hom_R(X',M)$ is epic. The
$\mathcal X$-precover $f$ is said to be a $\mathcal X$-cover if it is
minimal in the sense that each endomorphism $g$ of $X$ satisfying
$fg=f$ is an isomorphism. The class $\mathcal X$ is called precovering
or covering if each right module has a $\mathcal X$-precover or a $\mathcal X$-cover respectively. Dually are
defined $\mathcal X$-preenvelopes and $\mathcal X$-envelopes, and the
corresponding preenveloping and enveloping classes.

Most of the known examples of classes providing for approximations are
part of a ``small'' cotorsion pair $(\mathcal F, \mathcal C)$ (in the
sense that it is generated by a set, i. e., there exists a set of
modules $\mathcal G$ such that $\mathcal C = \mathcal G^\perp$). This
is due to the fact that a cotorsion pair generated by a set always
provide for precovers and preenvelopes, \cite[Theorem
3.2.1]{GobelTrlifaj} and \cite[Lemma 2.2.6]{GobelTrlifaj}. Moreover,
by \cite[Theorem 4.2.1]{GobelTrlifaj}, the left hand class of a
cotorsion pair generated by a set is deconstructible (the definition
will be precised later) and it has recently proved that
deconstructible classes are precovering (see \cite[Theorem
2.14]{SaorinStovicek} for a proof in exact categories and
\cite[Theorem 5.5]{Enochs12} for a proof in module categories), and
that deconstructible classes closed under products are preenveloping
\cite[Theorem 4.19]{SaorinStovicek}.

In this paper we are going to work with deconstructible classes. We
are going to give easier proofs of the aforementioned results concerning
deconstructible classes and approximations. Next we will use this
results to prove that, if $(\mathcal T, \mathcal S)$ is a Tor-pair and
$m$ a natural number, then $\mathcal T_m$ is always precovering, and
is preenveloping povided it is closed under direct products, i. e.,
they are satisfied the conditions of Corollary \ref{c:Definable}.

Given a class of right $R$-modules $\mathcal G$, a
\textit{$\mathcal G$-filtration} of a module $M$ is a continuous chain
of submodules of $M$, $(G_\alpha:\alpha < \kappa)$, where $\kappa$ is
a cardinal, such that $M=\bigcup_{\alpha < \kappa}G_\alpha$, $G_0=0$
and $\frac{G_{\alpha+1}}{G_\alpha} \in \mathcal G$ for each
$\alpha < \kappa$. We shall denote by $\Filt-\mathcal G$ the class of
all $\mathcal G$-filtered modules. We shall say that a class of
modules $\mathcal X$ is \textit{deconstructible} if there exists a set
of modules $\mathcal G$ such that $\mathcal X = \Filt-\mathcal G$.

We begin proving that a deconstructible class is precovering. Given a
class of right modules $\mathcal X$ and a module $M$, \textit{the trace of
  $\mathcal X$ in $M$} is the submodule
\begin{displaymath}
  t_{\mathcal X}(M) = \sum_{\substack{f \in \Hom(X,M)\\X \in
      \mathcal X}} \Img f
\end{displaymath}
The module $M$ is said to be \textit{generated by $\mathcal X$} if there exists
a family of modules in $\mathcal X$, $\{X_i:i \in I\}$, and an
epimorphism $\varphi:\bigoplus_{i \in I}X_i \rightarrow M$. We shall
denote by $\Gen (\mathcal X)$ the class of all modules generated by
$\mathcal X$. Recall that $M \in \Gen(\mathcal X)$ if and only if
$t_\mathcal{X}(M)=M$ \cite[Proposition 8.12]{AndersonFuller}.

\begin{lem}
  Let $\mathcal X$ be a class of right modules and $M$ a module. Suppose
  that $f:X \rightarrow M$ is a $\mathcal X$-precover of
  $M$. Then $\Img f = t_{\mathcal X}(M)$.
\end{lem}

\begin{proof}
  Clearly $\Img f \leq t_{\mathcal X}(M)$. The other inclusion follows
  form the fact that for each morphism $g:X' \rightarrow M$ with
  $X' \in \mathcal X$, there exists $h:X' \rightarrow X$ with
  $f h = g$ and, consequently, $\Img g \leq \Img f$.
\end{proof}

\begin{lem}\label{l:PrecoveringGen}
  Let $\mathcal X$ be a class of right modules. Then $\mathcal X$ is
  precovering if and only if each module in $\Gen (\mathcal X)$ has a
  $\mathcal X$-precover.
\end{lem}

\begin{proof}
  Supppose that every module in $\Gen(\mathcal X)$ has a
  $\mathcal X$-precover and let $M$ be any module. Since
  $t_{\mathcal X}(M) \in \Gen(\mathcal X)$ by \cite[Proposition
  8.12]{AndersonFuller}, there exists a $\mathcal X$-precover
  $f:X \rightarrow t_\mathcal{X}(M)$. We claim that $if$ is a
  $\mathcal X$-precover of $M$, where
  $i:t_{\mathcal X}(M) \rightarrow M$ is the inclusion: for any
  $g:X' \rightarrow M$ with $X' \in \mathcal X$, since
  $\Img g \leq t_{\mathcal X}(M)$, $g$ factors through
  $t_{\mathcal X}(M)$. Then $g=i \overline g$ for some
  $\overline g:X' \rightarrow t_{\mathcal X}(M)$. As $f$ is a
  $\mathcal X$-precover, there exists $h:X' \rightarrow F$ with
  $fh = \overline g$. Then $fhi=\overline g i = g$. This proves the
  claim.
\end{proof}

As we mentioned before, if $(\mathcal F,\mathcal C)$ is a cotorsion
pair generated by a set of modules $\mathcal G$, then $\mathcal F$ is
precovering and $\mathcal C$ is preenveloping. More
precisely, \cite[Theorem 3.2.1]{GobelTrlifaj} asserts that each module
has a $\mathcal C$-preenvelope with cokernel in $\Filt-\mathcal
G$. Using this result, Lemma \ref{l:PrecoveringGen} and the argument
in Salce Lemma \cite[Lemma 2.2.6]{GobelTrlifaj}, we can give an easy
proof to the fact that any deconstructible class is precovering. This
result was proved in \cite{SaorinStovicek} in exact categories and in
\cite{Enochs12} using module theory techniques.

\begin{thm}\label{t:DeconstructivePrecovering}
  Any deconstructible class of right modules is precovering.
\end{thm}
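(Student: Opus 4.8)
The plan is to reduce the statement, via Lemma \ref{l:PrecoveringGen}, to showing that every module generated by a deconstructible class $\mathcal X = \Filt\text{-}\mathcal G$ has an $\mathcal X$-precover, and then to produce that precover by a Salce-style argument using the cotorsion pair generated by the set $\mathcal G$. First I would pass to the cotorsion pair $(\mathcal F, \mathcal C) = ({}^{\perp}(\mathcal G^{\perp}), \mathcal G^{\perp})$ generated by the set $\mathcal G$; by \cite[Theorem 3.2.1]{GobelTrlifaj}, for any module $M$ there is a special $\mathcal C$-preenvelope, i.e.\ a short exact sequence $0 \to M \to C \to F \to 0$ with $C \in \mathcal C$ and $F \in \Filt\text{-}\mathcal G = \mathcal X$. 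Dualizing (the Salce Lemma argument, \cite[Lemma 2.2.6]{GobelTrlifaj}), this yields a special $\mathcal F$-precover of $M$: a short exact sequence $0 \to C' \to F' \to M \to 0$ with $F' \in \mathcal F$ and $C' \in \mathcal C$. Note $\mathcal F = \Filt\text{-}(\mathcal G \cup \{R\})$ and $\Filt\text{-}\mathcal G \subseteq \mathcal F$, but in general $\mathcal X \subsetneq \mathcal F$, so this special $\mathcal F$-precover is not yet an $\mathcal X$-precover.

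The key step is to bridge the gap between $\mathcal X = \Filt\text{-}\mathcal G$ and $\mathcal F = {}^{\perp}(\mathcal G^{\perp})$ on the subcategory $\Gen(\mathcal X)$. The point is that every module $X \in \mathcal X$ satisfies $\Ext^1_R(X, C) = 0$ for all $C \in \mathcal C$ (this is the Eklof Lemma: $\Filt\text{-}\mathcal G \subseteq {}^{\perp}(\mathcal G^{\perp})$), so any morphism from a module in $\mathcal X$ into the middle term $F'$ of the special $\mathcal F$-precover lifts appropriately. Concretely, given $M \in \Gen(\mathcal X)$ and the sequence $0 \to C' \to F' \to M \to 0$ from the special $\mathcal F$-precover, for any $g \colon X \to M$ with $X \in \mathcal X$, pulling back along $F' \to M$ gives an extension of $X$ by $C'$, which splits since $\Ext^1_R(X, C') = 0$; the splitting provides the required factorization $X \to F' \to M$. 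Hence $F' \to M$ is already an $\mathcal X$-precover of $M$. It remains only to check that $F' \in \mathcal X$: since $M \in \Gen(\mathcal X)$, one can choose the epimorphism $\bigoplus_{i} X_i \twoheadrightarrow M$ with all $X_i \in \mathcal X$ as the starting point, and arrange (or observe) that the special precover construction keeps $F'$ inside $\Filt\text{-}\mathcal G$ — concretely, $F'$ is built as an extension of something in $\Filt\text{-}\mathcal G$ by $\bigoplus_i X_i$, and $\mathcal X$ being closed under such filtrations and arbitrary direct sums, $F' \in \mathcal X$.

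I expect the main obstacle to be exactly this last bookkeeping: verifying that the middle term of the special precover lands in $\Filt\text{-}\mathcal G$ rather than merely in ${}^{\perp}(\mathcal G^{\perp})$. The honest way to handle it is to run the Salce construction by hand on an object of $\Gen(\mathcal X)$: start with $P = \bigoplus_{i \in I} X_i \twoheadrightarrow M$ (each $X_i \in \mathcal X$, and note $P \in \mathcal X$ since $\mathcal X$ is closed under arbitrary direct sums of its members — this follows from $\mathcal X = \Filt\text{-}\mathcal G$), let $N$ be the kernel, take a special $\mathcal C$-preenvelope $0 \to N \to C \to G \to 0$ with $C \in \mathcal C$, $G \in \Filt\text{-}\mathcal G$, and form the pushout $0 \to C \to F' \to M \to 0$ with a parallel exact sequence $0 \to G \to F' \to P \to 0$. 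Since $\mathcal X$ is closed under extensions by modules in $\mathcal X$ (again because $\Filt\text{-}\mathcal G$ is), $F' \in \mathcal X$, and the argument of the previous paragraph shows $F' \to M$ is an $\mathcal X$-precover. By Lemma \ref{l:PrecoveringGen}, $\mathcal X$ is precovering.
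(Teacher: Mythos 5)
Your proof is correct and follows essentially the same route as the paper: reduce via Lemma \ref{l:PrecoveringGen} to modules in $\Gen(\Filt-\mathcal G)$, then apply the Salce pushout to a special $\mathcal G^\perp$-preenvelope of the kernel of an epimorphism from a $\mathcal G$-filtered module, using the Eklof Lemma and the closure of $\Filt-\mathcal G$ under extensions and direct sums. (One cosmetic slip: the sequence parallel to $0 \to C \to F' \to M \to 0$ in the pushout is $0 \to P \to F' \to G \to 0$ rather than $0 \to G \to F' \to P \to 0$, but either orientation yields that $F'$ is $\mathcal G$-filtered.)
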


\begin{proof}
  Let $\mathcal G$ be any set of modules. In view of Lemma
  \ref{l:PrecoveringGen} we only have to see that each module in
  $\Gen(\Filt-\mathcal G)$ has a $(\Filt-\mathcal G)$-precover. Let
  $M \in \Gen(\Filt-\mathcal G)$ and take $f:F \rightarrow M$ an
  epimorphism with $F \in \Filt-\mathcal G$. By \cite[Theorem
  3.2.1]{GobelTrlifaj}, there exists a short exact sequence
  \begin{displaymath}
    \begin{tikzcd}
      0 \arrow{r} & \Ker f \arrow{r}{i} & P \arrow{r} & N \arrow{r} &
      0
    \end{tikzcd}
  \end{displaymath}
  with $P \in \mathcal G^\perp$ and $N \in \Filt-\mathcal G$. If we
  compute the pushout $i$ and the inclusion $j:\Ker f \rightarrow M$,
  we get the following commutative diagram with exact rows and colummns:
  \begin{displaymath}
    \begin{tikzcd}
      & 0 \arrow{d} & 0 \arrow{d} & & \\
      0 \arrow{r} & \Ker f \arrow{r}{j} \arrow{d}{i} & F \arrow{r}{f}
      \arrow{d} & M \arrow{r} \arrow[equal]{d} & 0\\
      0 \arrow{r} & P \arrow{r} \arrow{d} & Q \arrow{r}{g} \arrow{d} &
      M
      \arrow{r} & 0\\
      & N \arrow[equal]{r} \arrow{d} & N \arrow{d} & & \\
      & 0 & 0 & & \\
    \end{tikzcd}
  \end{displaymath}
  Since $F$ and $N$ belong to $\Filt-\mathcal G$, then so does
  $Q$. But $P \in \mathcal G^\perp$, so that, by Ekolf Lemma
  \cite[Lemma 3.1.2]{GobelTrlifaj},
  $P \in \left(\Filt-\mathcal G\right)^\perp$ as well (note that if
  $X \in \Filt-\mathcal G$, then $X$ is ${^\perp}P$-filtered so that
  $X \in {^\perp}P$; i. e. $P \in X^\perp$). Consequently, $g$ is a
  $(\Filt-\mathcal G)$-precover of $M$.
\end{proof}

\begin{rem}
  Given a class of modules $\mathcal X$ and a module $M$, a special
  $\mathcal X$-precover of $M$ is a morphism $f:X \rightarrow M$
  with $\Img f = t_{\mathcal X}(M)$ and $\Ker f \in \mathcal X^\perp$
  (note that we are not imposing that $f$ is epic [which cannot be if
  $\mathcal X$ is not generating!] as it is done in the classical
  definition of special preenvelopes \cite[Definition
  2.1.12]{GobelTrlifaj}). With this definition, the preceeding theorem
  actually proves that any deconstructible
  class is special precovering.
\end{rem}

Now we prove that a deconstructible class closed under products is
preenveloping. In order to do this, we are going to use the following
technical property which is employed in \cite[Corollary
6.2.2]{EnochsJenda}. This property is related with the cardinality
condition defined in \cite[Definition 1.1]{HolmJorgensen}.

\begin{defn}
  Let $\mathcal X$ be a class of right modules and $\lambda$ an infinite
  cardinal. We say that $\mathcal X$ satisfies the property
  $\mathbb P_\lambda$ if there exists an infinite cardinal
  $\kappa_\lambda$ with the following property:
  \begin{displaymath}
    X \in \mathcal X, S \leq X, |S| \leq \lambda \Rightarrow \exists Y
    \leq X \textrm{ with }S \leq Y, Y \in \mathcal X \textrm{ and }|Y|
    \leq \kappa_\lambda
  \end{displaymath}
\end{defn}

We establish the relationship between this property and the
existence preenvelopes, which was proved in
\cite[Corollary 6.2.2]{EnochsJenda} (see \cite[Proposition
1.2]{HolmJorgensen} too).

\begin{prop}\label{p:PropertyPAndPrecovers}
  Let $\mathcal X$ be a class of right $R$-modules and $\lambda$ an
  infinite cardinal. If $\mathcal X$ is closed under products and
  satisfies the property $\mathbb P_\lambda$, then each module $M$
  with cardinality less than or equal to $\lambda$ has a
  $\mathcal X$-preenvelope.
\end{prop}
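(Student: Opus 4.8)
The plan is to build the $\mathcal X$-preenvelope of a module $M$ with $\card{M}\le\lambda$ as a suitable map into a product of "small" members of $\mathcal X$, then correct its image. First I would set $\kappa=\kappa_\lambda$ as in property $\mathbb P_\lambda$ and form a representative set $\{X_j : j\in J\}$ of all (isomorphism classes of) modules in $\mathcal X$ of cardinality $\le\kappa$; since $\card{\Hom_R(M,X_j)}$ is bounded in terms of $\card{M}$ and $\kappa$, the set $\Hom_R(M,\prod_{j\in J}X_j)$ — equivalently $\prod_{j\in J}\Hom_R(M,X_j)$ — has bounded cardinality, so I may choose a \emph{set} $\{g_i : i\in I\}$ of all homomorphisms from $M$ into such products $\prod_{j}X_j$. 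Let $X=\prod_{i\in I}\left(\prod_{j\in J}X_j\right)$, which lies in $\mathcal X$ because $\mathcal X$ is closed under products, and let $\varphi\dd M\to X$ be the map whose $i$th component is $g_i$. The claim is that $\varphi$ is an $\mathcal X$-preenvelope of $M$.

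To verify this, take any $\mathcal X$-object $X'$ and any $h\dd M\to X'$; I must factor $h$ through $\varphi$. The idea is that $\Img h$ is a submodule of $X'$ of cardinality $\le\card{M}\le\lambda$, so property $\mathbb P_\lambda$ yields a submodule $Y\le X'$ with $\Img h\le Y$, $Y\in\mathcal X$ and $\card Y\le\kappa$. Thus $Y$ is (isomorphic to) one of the $X_j$'s, say via $u\dd Y\xrightarrow{\ \sim\ }X_{j_0}$; composing the corestriction $M\to Y$ of $h$ with $u$ and then with the canonical inclusion-type assignment into $\prod_j X_j$ (projecting to the $j_0$ coordinate, zero elsewhere — or more cleanly, using that $M\to Y\hookrightarrow X'$ and that there \emph{is} a map $M\to\prod_j X_j$ recovering $h$ through the $j_0$ component), I obtain a homomorphism $g\dd M\to\prod_{j\in J}X_j$ whose composition with the $j_0$th projection, followed by $u^{-1}$ and the inclusion $Y\hookrightarrow X'$, equals $h$. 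By construction $g$ is one of the $g_i$'s, so $g=\pi_i\varphi$ for the corresponding coordinate projection $\pi_i\dd X\to\prod_j X_j$; therefore $h$ factors as $(\iota_Y u^{-1}\pi_{j_0})\,\pi_i\,\varphi$, which is a factorization of $h$ through $\varphi$. This shows $\Hom_R(\varphi,X')$ is surjective, i.e.\ $\varphi$ is an $\mathcal X$-preenvelope.

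The main obstacle is the bookkeeping needed to guarantee that the collection of maps $\{g_i\}$ forms a genuine \emph{set} of bounded cardinality while still being large enough to catch every $h\dd M\to X'$: one must be careful that replacing $X'$ by the small submodule $Y$ does not lose information, and that identifying $Y$ with some $X_{j}$ via an isomorphism is compatible with the coordinate projections from $X$. A clean way to handle this is to work not with individual $X_j$ but with the single product $W=\prod_{j\in J}X_j$ as above, record $\mathrm{Hom}_R(M,W)$ as the index set $I$ (which is a set of cardinality at most $\card{W}^{\card M}$, hence bounded), set $X=W^{I}$, and let $\varphi$ be the diagonal-type map with components the elements of $I$. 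Then every $h\dd M\to X'$ factors through some $Y\le X'$ with $Y\in\mathcal X$, $\card Y\le\kappa$; the composite $M\to Y\hookrightarrow X'$ together with an embedding $Y\hookrightarrow W$ (available since $Y$ is isomorphic to a factor of $W$, hence embeds in $W$) produces an element of $I=\Hom_R(M,W)$, through which $\varphi$ factors and which in turn composes back to $h$. The remaining verification that such a factorization indeed reconstructs $h$ is routine diagram-chasing with product and inclusion maps.
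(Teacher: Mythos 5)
Your proof is correct and follows essentially the same route as the paper's (sketched) argument: map $M$ canonically into a product of representatives of the modules in $\mathcal X$ of cardinality at most $\kappa_\lambda$, closed up under products, and use property $\mathbb P_\lambda$ to replace an arbitrary target $X'$ by a small submodule $Y\in\mathcal X$ containing $\Img h$, so that $h$ is recovered from a coordinate projection. The only (immaterial) difference is that you index by $\Hom_R(M,\prod_j X_j)$ and take $X=\bigl(\prod_j X_j\bigr)^{I}$, whereas the paper uses $\prod_{G}G^{\Hom_R(M,G)}$; both are the same canonical preenvelope-by-products construction.
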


The idea of the proof of this theorem is to take, given a module $M$
with cardinality less than or equal to $\lambda$, a representing set
$\mathcal G$ of the class of all modules in $\mathcal X$ with
cardinality less than or equal to $\kappa_\lambda$. Then the canonical
morphism from $M$ to $\prod_{G \in \mathcal G}G^{\Hom_R(M,G)}$ is
trivially a $\mathcal X$-preenvelope as a consequence of the property
$\mathbb P_\lambda$.

\begin{rem}
  We could consider the dual property of $\mathbb P_\lambda$: we say that
  $\mathcal X$ satisfies the property $\mathbb Q_\lambda$ if there
  exists an infinite cardinal $\kappa_\lambda$ satisfying
  \begin{displaymath}
    X \in \mathcal X, S \leq X, \left|\frac{X}{S}\right| \leq \lambda
    \Rightarrow \exists S'
    \leq S \textrm{ with } \frac{X}{S'} \in \mathcal X \textrm{ and }\left|\frac{X}{S'}\right|
    \leq \kappa_\lambda
  \end{displaymath}
  The property $\mathbb Q_\lambda$ is related with the existence of
  precovers: if $\mathcal X$ is closed under direct sums and satisfies
  the property $\mathbb Q_\lambda$, then each module $M$ with
  cardinality less than or equal to $\lambda$ has a
  $\mathcal X$-precover. $\mathbb Q_\lambda$ is related with the
  co-cardinality condition defined in \cite[Definition
  1.1]{HolmJorgensen}. 
\end{rem}

One useful tool to deal with filtrations is Hill Lemma
\cite[Theorem 4.2.6]{GobelTrlifaj}. Roughly speaking, it states that a
filtration of a module can be enlarged to a class of submodules with
certain properties. Recall that a cardinal $\kappa$ is \textit{regular} if it is not the union of less than $\kappa$ sets
with cardinality less than $\kappa$. Recall that given an infinite regular cardinal
$\kappa$, a module $M$ is $\kappa$-presented if it there exists a
presentation of $M$ with $\kappa$ generators and $\kappa$ relations.

\begin{thm}
  Let $\kappa$ be an infinite regular cardinal and $\mathcal G$ a set
  of $< \kappa$-presented right modules. Let $M$ be a module with a
  $\mathcal G$-filtration, $M=\bigcup_{\alpha < \mu}M_\alpha$. Then
  there is a family $\mathcal H$ of submodules of $M$ such that:
  \begin{enumerate}
  \item[(H1)] $M_\alpha \in \mathcal H$ for each $\alpha < \mu$.

  \item[(H2)] $\mathcal H$ is closed under arbitrary sums and
    intersections.

  \item[(H3)] Let $N, P \in \mathcal H$ such that $N \leq P$. Then
    $\frac{P}{N}$ is filtered by modules in
    $\left\{\frac{M_{\alpha+1}}{M_\alpha}:\alpha < \mu\right\}$.

  \item[(H4)] Let $N \in \mathcal H$ and $X$ a subset of $M$ of
    cardinality smaller than $\kappa$. Then there is a
    $P \in \mathcal H$ such that $N \cup X \leq P$ and $\frac{P}{N}$
    is $< \kappa$-presented.
  \end{enumerate}
\end{thm}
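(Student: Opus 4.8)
The plan is to run the classical argument behind Hill's lemma. Write the given filtration as $M=\bigcup_{\alpha<\mu}M_\alpha$ with each $M_{\alpha+1}/M_\alpha$ isomorphic to a module in $\mathcal G$, hence $<\kappa$-presented. First I would fix, for every $\alpha<\mu$, a subset $X_\alpha\subseteq M_{\alpha+1}$ with $|X_\alpha|<\kappa$ whose image generates $M_{\alpha+1}/M_\alpha$; then $M_{\alpha+1}=M_\alpha+\langle X_\alpha\rangle$ and, by transfinite induction and continuity, $M=\sum_{\alpha<\mu}\langle X_\alpha\rangle$. Using that $M_{\alpha+1}/M_\alpha$ is moreover presented with $<\kappa$ relations on these generators, I would also fix a subset $Y_\alpha\subseteq M_\alpha$ with $|Y_\alpha|<\kappa$ of \emph{relation witnesses}: elements of $M_{\alpha+1}$ of the form $\sum_{x\in X_\alpha}r_xx$ that already lie in $M_\alpha$ and that generate the kernel of the canonical map from the free module on $X_\alpha$ onto $M_{\alpha+1}/M_\alpha$. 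Finally, choosing for each $m\in M$ a representation as a finite $R$-linear combination of elements of $\bigcup_{\alpha<\mu}X_\alpha$, I would record the (finite) set $\operatorname{supp}(m)\subseteq\mu$ of indices that occur, arranging that $\operatorname{supp}(y)\subseteq\alpha$ for every $y\in Y_\alpha$ (possible since $y\in M_\alpha=\sum_{\beta<\alpha}\langle X_\beta\rangle$).

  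Next I would call a subset $S\subseteq\mu$ \emph{closed} if $\operatorname{supp}(y)\subseteq S$ for every $\alpha\in S$ and every $y\in Y_\alpha$, set $M(S)=\sum_{\alpha\in S}\langle X_\alpha\rangle$, and define $\mathcal{H}=\{M(S):S\subseteq\mu\text{ closed}\}$. Several facts are then routine bookkeeping: closed sets are stable under arbitrary unions and intersections; since the supports attached to $Y_\alpha$ lie strictly below $\alpha$, the closure $\bar S$ of a set $S$ is reached in $\omega$ steps of adjoining, below each current index, a set of size $<\kappa$, so that $|\bar S|<\kappa$ whenever $|S|<\kappa$ (here the regularity of $\kappa$ is used); and each initial segment $[0,\alpha)$ is closed with $M([0,\alpha))=M_\alpha$, which gives (H1). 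For (H2) one checks $M\bigl(\bigcup_iS_i\bigr)=\sum_iM(S_i)$ (immediate) and $M\bigl(\bigcap_iS_i\bigr)=\bigcap_iM(S_i)$, whose nontrivial inclusion is the first place the relation witnesses $Y_\alpha$ are needed, via a normal-form description of the elements of $M(S)$.

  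The heart of the proof is the structural lemma: \emph{if $S\subseteq T$ are closed, then $\bigl(M(S\cup(T\cap\beta)):\beta\le\mu\bigr)$ is a filtration of $M(T)/M(S)$ whose consecutive factors are, for $\alpha\in T\setminus S$, isomorphic to $M_{\alpha+1}/M_\alpha$, and trivial otherwise.} I would prove this by transfinite induction, the task at a successor step with $\alpha\in T\setminus S$ being to identify
  \[
    \frac{M\bigl(S\cup(T\cap(\alpha+1))\bigr)}{M\bigl(S\cup(T\cap\alpha)\bigr)}\;\cong\;\frac{M_{\alpha+1}}{M_\alpha},
  \]
  where both the surjectivity onto $M_{\alpha+1}/M_\alpha$ and the injectivity of the induced comparison map reduce, after normalizing coordinates with respect to the $X_\beta$, to the defining property of $Y_\alpha$: every relation among the generators $X_\alpha$ in $M_{\alpha+1}/M_\alpha$ is ``supported'' inside any closed set containing $\alpha$. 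Granting the structural lemma, (H3) is its instance $N=M(S)\le P=M(T)$, where by (H2) we may replace $S$ by $S\cap T$ and so assume $S\subseteq T$. For (H4), given $N=M(S)\in\mathcal{H}$ and $X\subseteq M$ with $|X|<\kappa$, I would take $T=\overline{S\cup\bigcup_{x\in X}\operatorname{supp}(x)}$ and $P=M(T)$: then $N\cup X\subseteq P$ and $N\le P$, and by the structural lemma $P/N$ is filtered by $<\kappa$-presented modules indexed by $T\setminus S$, a set of size $<\kappa$, so $P/N$ is itself $<\kappa$-presented.

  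The main obstacle is the normal-form argument underpinning the nontrivial half of (H2) and the identification in the structural lemma: one must show that the coordinates of an element of $M(S)$ relative to the family $(X_\alpha)_{\alpha<\mu}$ can be brought into a canonical form using only the relations witnessed by the $Y_\alpha$ with $\alpha\in S$, so that enlarging or shrinking the index set of a member of $\mathcal{H}$ neither creates nor destroys elements unexpectedly. Once this is in place, the cardinality estimates (via regularity of $\kappa$) and the verification of (H1)--(H4) are straightforward.
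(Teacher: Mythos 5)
The paper does not actually prove this statement: it is the classical Hill Lemma, quoted from \cite[Theorem 4.2.6]{GobelTrlifaj}, so there is no in-paper argument to compare yours against. Your sketch is, in substance, the standard proof from that source: choose generating sets $X_\alpha$ of size $<\kappa$ with $M_{\alpha+1}=M_\alpha+\langle X_\alpha\rangle$, use $<\kappa$-presentability to pick $<\kappa$ relation witnesses, define closed subsets of $\mu$ via supports, set $\mathcal H=\{M(S):S\ \text{closed}\}$, and verify (H1)--(H4) through the structural lemma. The skeleton is correct, and (H1), the union half of (H2), the cardinality estimate for closures, and the deduction of (H4) from the structural lemma are all handled properly.

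The gap is that essentially all of the mathematical content of the theorem is concentrated in the one step you defer: the ``normal-form argument'', which is equivalent to the lemma that for a closed set $S$ one has $M(S)\cap M_\alpha=M(S\cap\alpha)$ for every $\alpha$ (proved by transfinite induction on $\alpha$, the successor step using that $\langle X_\alpha\rangle\cap M_\alpha=\langle Y_\alpha\rangle\subseteq M(S\cap\alpha)$ by closedness of $S$). This single lemma is what yields both the nontrivial inclusion $\bigcap_iM(S_i)\subseteq M\bigl(\bigcap_iS_i\bigr)$ in (H2) and the identification of the successor factor with $M_{\alpha+1}/M_\alpha$ in your structural lemma; naming it as ``the main obstacle'' without carrying it out means the proposal, as written, asserts rather than proves the theorem. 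Two smaller points: the witnesses you describe are images in $M$ of generators of the kernel of $R^{(X_\alpha)}\to M_{\alpha+1}/M_\alpha$, so they form a generating set of $\langle X_\alpha\rangle\cap M_\alpha$ rather than of that kernel (and the fact that this kernel is $<\kappa$-generated is itself a Schanuel-type consequence of $<\kappa$-presentability that should be cited); and your $\omega$-step closure estimate requires $\kappa$ to be uncountable, since for $\kappa=\aleph_0$ an $\omega$-indexed union of finite sets need not be finite --- a hypothesis that the statement as quoted also needs but omits.
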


Using Hill lemma, we prove that a deconstructible class satisfies $\mathbb Q_\lambda$.

\begin{prop}
  Let $\mathcal X$ be a deconstructible class of right modules. Then $\mathcal X$
  satisfies $\mathbb Q_\lambda$ for each
  infinite cardinal $\lambda$.
\end{prop}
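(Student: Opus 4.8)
The plan is to use that $\mathcal X=\Filt-\mathcal G$ for a \emph{set} $\mathcal G$ and feed a $\mathcal G$-filtration of $X$ into the Hill Lemma. First I fix an infinite regular cardinal $\kappa$ so that every module in $\mathcal G$ is $<\kappa$-presented and $\kappa>\max\{|R|,\lambda,\aleph_0\}$ (such a $\kappa$ exists since $\mathcal G$ is a set); a routine cardinal computation then shows that every $<\kappa$-presented module has cardinality $<\kappa$, and I set $\kappa_\lambda:=\kappa$. Now let $X\in\mathcal X$ and $S\le X$ with $|X/S|\le\lambda$, fix a $\mathcal G$-filtration $X=\bigcup_{\alpha<\mu}X_\alpha$, and let $\mathcal H$ be the family produced by the Hill Lemma; recall $0\in\mathcal H$ and $X\in\mathcal H$ by (H1)--(H2).

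The first step is to manufacture a ``small'' $P\in\mathcal H$ with $X=S+P$. I choose a set $T\subseteq X$ of representatives of the cosets of $S$, so that $|T|=|X/S|\le\lambda<\kappa$ and $X=S+\langle T\rangle$. Applying (H4) with $N=0$ to the subset $T$ gives $P\in\mathcal H$ with $T\subseteq P$ and $P=P/0$ being $<\kappa$-presented; hence $|P|<\kappa$ and $X=S+P$. By (H3), both $P$ and $X/P$ are $\mathcal G$-filtered, so $P,X/P\in\mathcal X$, and moreover $|S\cap P|\le|P|<\kappa$.

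Next I reduce the problem to a statement ``inside $P$''. For \emph{any} $S'\le S$ with $S'+P=X$ one has $X/S'\cong P/(S'\cap P)$, so $|X/S'|\le|P|<\kappa_\lambda$; and if in addition $S'\in\mathcal H$, then $S'\cap P\in\mathcal H$ by (H2), whence $P/(S'\cap P)$ is $\mathcal G$-filtered by (H3) and therefore lies in $\mathcal X$. Thus it suffices to find $S'\in\mathcal H$ with $S'\le S$ and $S'+P=X$; since $X=S+P$, the last condition says exactly that $S'$ surjects onto $X/P$ under $X\twoheadrightarrow X/P$ (which $S$ itself already does). Such an $S'$ is built by a transfinite construction along a $\mathcal G$-filtration $X/P=\bigcup_{\beta<\nu}\bar C_\beta$: one produces an increasing continuous chain of submodules of $S$, at successor steps adjoining $<\kappa$-many elements of $S$ that lift a generating set of $\bar C_{\beta+1}/\bar C_\beta$ together with the ``$S$-parts'' of whatever (H4) has so far contributed, and repeatedly invoking (H4) to keep the partial object inside $\mathcal H$; regularity of $\kappa$ bounds each stage by $\kappa$, and closure of $\mathcal H$ under sums (H2) lets the union $S'$ land in $\mathcal H$ while staying inside $S$ and surjecting onto $X/P$. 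Then $X/S'\cong P/(S'\cap P)\in\mathcal X$, $|X/S'|<\kappa_\lambda$, and $X/S'\twoheadrightarrow X/S$ because $S'\le S$, which is $\mathbb Q_\lambda$.

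The step I expect to be the real obstacle is the construction of $S'$ in the previous paragraph. The difficulty is that the raw output of (H4) need not be contained in $S$, so a single application will not do, and one is forced to interleave the two requirements ``$S'\in\mathcal H$'' and ``$S'\le S$'' over a transfinite recursion (in effect, a relativised rerun of the proof of the Hill Lemma with respect to the submodule $S$) and check that, after the necessary intersections with $S$, membership in $\mathcal H$ still survives in the limit. By contrast, the base case is trivial -- when $X$ is already $<\kappa$-presented one may take $S'=0$, for then $X/S'=X\in\mathcal X$ has cardinality $<\kappa=\kappa_\lambda$ -- and all the cardinal bookkeeping, together with the fact that (H2)--(H4) hand back modules of $\mathcal X$ of size $<\kappa$, is routine.
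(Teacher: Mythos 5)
Two separate issues.

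First, you are proving the wrong statement. The ``$\mathbb Q_\lambda$'' in the proposition is a misprint for ``$\mathbb P_\lambda$'': the paper's own proof takes $S\le X$ with $|S|\le\lambda$ and \emph{enlarges} $S$ to a submodule $Y\in\mathcal H$ with $Y\in\mathcal X$ and $|Y|\le\kappa_\lambda$, explicitly concluding that $\mathcal X$ satisfies $\mathbb P_\lambda$; and $\mathbb P_\lambda$ is exactly what is needed to feed Proposition \ref{p:PropertyPAndPrecovers} and obtain Theorem \ref{t:DeconstructiblePreenveloping}. The intended argument is therefore much shorter than yours: enumerate $S=\{s_\alpha:\alpha<\mu\}$ with $\mu\le\lambda$ and use (H4) and (H2) recursively to build a continuous chain $S_\alpha\in\mathcal H$ with $<\kappa$-presented consecutive quotients catching the $s_\alpha$; the union is $\mathcal X$-filtered by (H3) and has cardinality at most $\max\{\lambda,\kappa\}$. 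Your attempt at the literal $\mathbb Q_\lambda$ (shrinking $S$ to an $S'$ with $X/S'\in\mathcal X$ small) is a genuinely different, and substantially harder, claim.

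Second, your proof of that literal claim has a real gap precisely at the step you flag: there need not exist any $S'\in\mathcal H$ with $S'\le S$ and $S'+P=X$. The family $\mathcal H$ is rigidly tied to the chosen filtration of $X$, and its trace on the lattice of submodules of $S$ can be trivial. Concretely, take $R=\mathbb Z$, $\mathcal G=\{\mathbb Z\}$ (so $\mathcal X$ is the class of free abelian groups), $X=\mathbb Z^{(\omega_1)}$ with basis $\{e_i\}$ and the standard filtration by partial sums; then $\mathcal H=\{\langle e_i:i\in A\rangle : A\subseteq\omega_1\}$ satisfies (H1)--(H4). Let $S$ be the kernel of the map $X\to\mathbb Z/2\mathbb Z$ sending every $e_i$ to $1$, so $|X/S|=2$. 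No $e_i$ lies in $S$, hence the only member of $\mathcal H$ contained in $S$ is $0$, while your $P$ is a countably generated member of $\mathcal H$; thus $S'+P=P\neq X$ for the only admissible $S'$. (The conclusion of $\mathbb Q_{\aleph_0}$ does hold here, via $S'=\langle e_i-e_0 : i\neq 0\rangle$, but that $S'$ is not in $\mathcal H$, so your mechanism cannot find it.) The suggested fix --- a ``relativised rerun of the Hill Lemma with respect to $S$'' --- does not repair this: intersecting (H4)-outputs with $S$ immediately leaves $\mathcal H$, and once $S'\notin\mathcal H$ you lose the only tool, namely (H3), that put $P/(S'\cap P)$ back into $\mathcal X$. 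So the transfinite construction in your third paragraph cannot be carried out as described.
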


\begin{proof}
  Since $\mathcal X$ is deconstructible, there exists a set
  $\mathcal G$ such that $\mathcal X = \mathcal Filt-\mathcal G$. Let
  $\kappa$ be an infinite regular cardinal such that each module in
  $\mathcal G$ is $< \kappa$-presented.

  Let $\lambda$ be an infinite cardinal and set
  $\kappa_\lambda := \max\{\lambda,\kappa\}$. Let $X$ be a module in
  $\mathcal X$ and $S \leq X$ with $|X| \leq \lambda$. Now denote by
  $\mathcal H$ the family of submodules of $X$ given by the Hill
  Lemma.

  If $\lambda < \kappa$ then $S$ is contained in a
  $< \kappa$-presented submodule of $M$ by (H4) which belongs to
  $\mathcal X$ by (H3). Then
  $\mathbb P_\lambda$ is satisfied since $\kappa_\lambda = \kappa$ in
  this case.

  Suppose that $\lambda \geq \kappa$ so that
  $\kappa_\lambda = \lambda$. Write $S = \{s_\alpha:\alpha < \mu\}$
  for some cardinal $\mu \leq \lambda$. Applying recursively (H4) and
  (H2) it is easy to construct a continuous chain of submodules of
  $M$, $\{S_\alpha:\alpha < \mu\}$, with $S_\alpha \in \mathcal H$,
  $\frac{S_{\alpha+1}}{S_\alpha}$ $<\kappa$-presented and $x_\alpha
  \in S_{\alpha+1}$ for each $\alpha < \mu$. Now, by (H3),
  $\frac{S_{\alpha+1}}{S_\alpha} \in \mathcal X$ so that
    $Y=\bigcup_{\alpha < \mu}S_\alpha$ is $\mathcal X$-filtered;
    by \cite[Corollary 2.11]{SaorinStovicek}, $Y$ belongs to $\mathcal
    X$. Moreover $Y$ has cardinality less than or equal to $\lambda =
    \kappa_\lambda$. Then $\mathcal X$ satisfies $\mathbb P_\lambda$.
\end{proof}

As a consequence of this result and Proposition
\ref{p:PropertyPAndPrecovers} we inmediately get that a
deconstructible class closed under product is preenveloping. This
result was proved in \cite[Theorem 4.19]{SaorinStovicek} for exact
categories. 

\begin{thm}\label{t:DeconstructiblePreenveloping}
  Each deconstructible
  class of right modules closed under products is preenveloping.
\end{thm}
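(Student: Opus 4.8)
The plan is to obtain the statement as a direct combination of Proposition~\ref{p:PropertyPAndPrecovers} with the preceding proposition, which establishes that a deconstructible class satisfies the property $\mathbb{P}_\lambda$ for every infinite cardinal $\lambda$. So the argument is essentially a bookkeeping step that glues the two halves of the section together.

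Concretely, I would argue as follows. Let $\mathcal{X}$ be a deconstructible class of right $R$-modules that is closed under products, and let $M$ be an arbitrary right $R$-module. Put $\lambda = \max\{\card{M},\aleph_0\}$, an infinite cardinal with $\card{M}\leq\lambda$. By the previous proposition $\mathcal{X}$ satisfies $\mathbb{P}_\lambda$, and by hypothesis it is closed under products; hence Proposition~\ref{p:PropertyPAndPrecovers} provides an $\mathcal{X}$-preenvelope of $M$. Since $M$ was arbitrary, $\mathcal{X}$ is preenveloping.

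There is essentially no obstacle at this stage: all of the substance — Hill's Lemma and the resulting verification of $\mathbb{P}_\lambda$, together with the product construction $M \to \prod_{G\in\mathcal{G}}G^{\Hom_R(M,G)}$ underlying Proposition~\ref{p:PropertyPAndPrecovers} — has already been set up. The only minor points to keep in mind are that one must feed an \emph{infinite} cardinal $\lambda$ into $\mathbb{P}_\lambda$ (so $\card{M}$ is padded up to $\aleph_0$ when $M$ is finite), and that the previous proposition is invoked uniformly over all $\lambda$, so the conclusion indeed covers modules of every cardinality rather than a single fixed size.
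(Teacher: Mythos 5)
Your proof is correct and is exactly the paper's argument: the theorem is stated there as an immediate consequence of Proposition~\ref{p:PropertyPAndPrecovers} together with the preceding proposition (whose statement says $\mathbb{Q}_\lambda$ but whose proof in fact verifies $\mathbb{P}_\lambda$, as you correctly use). Your added remark about padding $\card{M}$ up to an infinite cardinal is a harmless and sensible bookkeeping point.
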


Now we apply these results to Tor-pairs

\begin{cor}\label{c:TApproximations}
  Let $(\mathcal T, \mathcal S)$ be a Tor-pair and $m$ a natural
  number. Then:
  \begin{enumerate}
  \item The class $\mathcal T_m$ is precovering.

  \item If $R$ satisfies the equivalent condition of Corollary
    \ref{c:Definable}, $\mathcal T_m$ is preenveloping.
  \end{enumerate}
\end{cor}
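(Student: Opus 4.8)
The plan is to combine Theorems \ref{t:DeconstructivePrecovering} and \ref{t:DeconstructiblePreenveloping} with the deconstructibility of the class $\mathcal T_m$. So the crux is to show that, for each natural number $m$, the class $\mathcal T_m$ is deconstructible; once this is done, part (1) is immediate from Theorem \ref{t:DeconstructivePrecovering}, and part (2) follows from Theorem \ref{t:DeconstructiblePreenveloping} together with Corollary \ref{c:Definable}, which guarantees that under the stated hypothesis $\mathcal T_m$ is closed under products (in fact definable).

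To prove that $\mathcal T_m$ is deconstructible I would argue as follows. First treat the case $m=0$, i.e. the class $\mathcal T$ itself. Since $(\mathcal T,\mathcal S)$ is a Tor-pair, $\mathcal T = {}^\top\mathcal S$; one checks that $\mathcal T$ is closed under direct limits (Proposition at the start of Section~1), extensions and direct summands, and $\mathcal T$ contains a deconstructing set by the standard argument: every module in $\mathcal T$, being a direct limit of finitely presented modules mapping into it, is filtered by modules of bounded cardinality lying in $\mathcal T$ — this is the usual ``Tor-orthogonal classes are deconstructible'' fact (one can invoke the left-right symmetric analogue of \cite[Theorem 4.2.1]{GobelTrlifaj}, or cite that the left hand class of a Tor-pair is always the class of flat-like modules relative to $\mathcal S$ and hence deconstructible by a Hill-lemma / Kaplansky-type argument). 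Thus there is a set $\mathcal G_0$ with $\mathcal T = \Filt\text{-}\mathcal G_0$.

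For general $m$, I would proceed by induction, using the short exact sequences
\begin{displaymath}
  \begin{tikzcd}
    0 \arrow{r} & K \arrow{r} & P \arrow{r} & T \arrow{r} & 0
  \end{tikzcd}
\end{displaymath}
with $P$ projective and $K \in \mathcal T_{m-1}$ that characterize membership in $\mathcal T_m$ (Definition \ref{d:RelativeDimension} and Lemma \ref{l:DimensionTor}). The key point is that $\mathcal T_m$ is again closed under direct limits (Proposition \ref{p:closure}), under extensions (from the $\Tor$-characterization of Lemma \ref{l:DimensionTor} and the long exact sequence), and under direct summands; moreover it has a generating family of ``small'' objects, since any $T \in \mathcal T_m$ is a direct limit of finitely generated submodules and by dimension shifting together with the inductive deconstructing set for $\mathcal T_{m-1}$ one can filter $T$ by modules of cardinality bounded by a fixed cardinal $\kappa_m$ depending only on $|R|$ and $m$. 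Concretely, I would let $\mathcal G_m$ be a representative set of all modules in $\mathcal T_m$ of cardinality $\le \kappa_m$ and verify $\mathcal T_m = \Filt\text{-}\mathcal G_m$ using the Hill Lemma to build the required continuous chain with consecutive quotients in $\mathcal G_m$. (Alternatively, one observes that $\mathcal T_m = {}^{\top}(\Omega^{-(m)}\mathcal S)$ is itself the left hand class of a Tor-pair after a cosyzygy shift, and reduces to the case $m=0$.)

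The main obstacle I anticipate is the bookkeeping in the deconstructibility argument: producing a single set $\mathcal G_m$ and showing every module of $\mathcal T_m$ is $\mathcal G_m$-filtered, rather than merely filtered by modules of $\mathcal T_m$ of unbounded size. This is exactly where the Hill Lemma (the theorem stated just before Theorem \ref{t:DeconstructiblePreenveloping}) and the stability of the $\Tor$-characterization under the operations involved are needed; the rest — feeding deconstructibility and closure under products into Theorems \ref{t:DeconstructivePrecovering} and \ref{t:DeconstructiblePreenveloping} — is formal.
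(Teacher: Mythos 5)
Your plan follows the paper's proof exactly: establish that $\mathcal T_m$ is deconstructible and feed this into Theorem \ref{t:DeconstructivePrecovering} for part (1) and Theorem \ref{t:DeconstructiblePreenveloping} (plus the closure under products from Corollary \ref{c:Definable}) for part (2). The only difference is that the paper obtains deconstructibility by citation rather than by your direct filtration sketch --- it writes $\mathcal T={}^{\perp}\{S^{c}:S\in\mathcal S\}$ using character modules so that the Eklof--Trlifaj theorem applies, and then invokes \cite[Proposition 3.2]{Izurdiaga} for $\mathcal T_m$ --- and in your parenthetical alternative the shift should be by \emph{syzygies} of $\mathcal S$ (since $\Tor_{m+1}^R(M,S)\cong\Tor_1^R(M,\Omega^{m}(S))$), not cosyzygies.
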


\begin{proof}
  By \cite[Theorem 8]{EklofTrlifaj}, $\mathcal T$ is a deconstructible
  class, since $\mathcal T = {^\perp}\mathcal I$, where $\mathcal I =
  \{S^c:S \in \mathcal S\}$ by \cite[Lemma 2.2.3]{GobelTrlifaj} ($S^c$
  being the caracter module of $S$). By \cite[Proposition
  3.2]{Izurdiaga}, $\mathcal T_m$ is deconstructible as well. Then (1)
  follows from Theorem \ref{t:DeconstructivePrecovering} and (2) from
  Theorem \ref{t:DeconstructiblePreenveloping}.
\end{proof}

\bibliographystyle{alpha} \bibliography{references}

\end{document}